\def\ints{{\mathbb Z}}
\def\proj{{\mathbb P}}
\def\aff{{\mathbb A}}
\def\FF{{\mathbb F}}
\def\qed{\hfill $\Box$}
\def\Frac{{\text{Frac}}}
\def\Gal{{\text{Gal}}}
\def\mc#1{\mathcal{#1}}
\def\mf#1{\mathfrak{#1}}
\def\ol#1{\overline{#1}}
\newtheorem{theorem}[equation]{Theorem}
\newtheorem{prop}[equation]{Proposition}
\newtheorem{lemma}[equation]{Lemma}
\newtheorem{corollary}[equation]{Corollary}
\newtheorem{predefinition}[equation]{Definition}
\newtheorem{preremark}[equation]{Remark}
\newenvironment{remark}{\begin{preremark}\rm}{\end{preremark}}
\newtheorem{preproblem}[equation]{Problem}
\newenvironment{prob}{\begin{preproblem}\rm}{\end{preproblem}}
\newtheorem{preconstruction}[equation]{Construction}
\newtheorem{prenotation}[equation]{Notation}
\newtheorem{preexample}[equation]{Example}
\newtheorem{preclaim}[equation]{Claim}
\newtheorem{prequestion}[equation]{Question}
\newenvironment{question}{\begin{prequestion}\rm}{\end{prequestion}}
\numberwithin{equation}{section}
\begin{document}

\title{The local lifting problem for $A_4$}

\author{Andrew Obus}
\address{University of Virginia, 141 Cabell Drive, Charlottesville, VA
22904}
\email{andrewobus@gmail.com}
\thanks{The author was supported by NSF FRG grant DMS-1265290.  He
  thanks Florian Pop, David Harbater and the referee for useful comments,
  and Adam Levine for inadvertently suggesting Corollary \ref{Cglobal}
  while making a joke about paper sizes.}
\let\thefootnote\relax\footnotetext{\emph{2010 Mathematics Subject
    Classification}.  Primary 14H37, 12F10; Secondary 13B05, 14B12}
\date{\today}
\keywords{local lifting problem, Oort group, Artin-Schreier extension}
\begin{abstract}
We solve the local lifting problem for the alternating group $A_4$,
thus showing that it is a \emph{local Oort group}.  Specifically, if $k$ is an algebraically closed field of characteristic
$2$, we prove that every $A_4$-extension of $k[[s]]$ lifts to characteristic
zero.  
\end{abstract}
\maketitle

\section{Introduction}\label{Sintro}

This paper concerns the \emph{local lifting problem} about lifting
Galois extensions of power series rings from characteristic $p$ to
characteristic zero:

\begin{prob}(The local lifting problem)\label{llp}
  Let $k$ be an algebraically closed field of characteristic $p$ and $G$ a finite group.  
  Let $k[[z]]/k[[s]]$ be a $G$-Galois extension (that is, $G$ acts
  on $k[[z]]$ by $k$-automorphisms with fixed ring $k[[s]]$).  Does this
  extension lift to characteristic zero?  That is,
 does there exist a DVR $R$ of characteristic zero with residue field $k$ and a
  $G$-Galois extension $R[[Z]]/R[[S]]$ that reduces to
  $k[[z]]/k[[s]]$?  
\end{prob}

We will refer to a $G$-Galois extension $k[[z]]/k[[s]]$ as a
\emph{local $G$-extension}.
Basic ramification theory shows that any group $G$ that occurs
as the Galois group of a local extension is of the form
$P \rtimes \ints/m$, with $P$ a $p$-group and $p \nmid m$.
In \cite{CGH:ll}, Chinburg, Guralnick, and Harbater
ask, given a prime $p$, for which groups $G$ (of the form $P \rtimes \ints/m$) is it true that
all local $G$-actions (over all algebraically closed fields of characteristic $p$) lift to
characteristic zero? Such a group is called a \emph{local Oort group}
(for $p$).  Due to various obstructions (The \emph{Bertin obstruction}
of \cite{Be:ol}, the \emph{KGB
  obstruction} of \cite{CGH:ll}, and the \emph{Hurwitz tree
  obstruction} of \cite{BW:ac}), the list of possible local Oort groups is
quite limited. In particular, due to \cite[Theorem 1.2]{CGH:ll} and
\cite{BW:ac}, if a group $G$ is a local Oort group for $p$, then $G$ is
either cyclic, dihedral of order $2p^n$, or the alternating group $A_4$
($p=2$).  Cyclic groups are known to be local Oort --- this is the
so-called \emph{Oort conjecture}, proven by Obus-Wewers and Pop in
\cite{OW:ce}, \cite{Po:oc}.  Dihedral groups of order $2p$ are known
to be local Oort for $p$ odd due to Bouw-Wewers (\cite{BW:ll}) and for
$p = 2$ due to Pagot (\cite{Pa:rc}).
The group $D_9$ is local Oort by \cite{Ob:go}.  Our main theorem is: 

\begin{theorem}\label{Tmain}
If $k$ is an algebraically closed field of characteristic $2$, then every $A_4$-extension of $k[[s]]$ lifts to characteristic zero.
That is, the group $A_4$ is a local Oort group for $p = 2$.
\end{theorem}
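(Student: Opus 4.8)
The plan is to reduce the local lifting of an $A_4$-extension to a $\ints/3$-equivariant lifting problem for $V_4$-extensions, and then to solve the latter by explicitly constructing compatible differential data. Write $A_4 = V_4 \rtimes \ints/3$, with $V_4$ the normal Sylow $2$-subgroup. Given a local $A_4$-extension $k[[z]]/k[[s]]$, the tame quotient $\ints/3 = A_4/V_4$ produces an intermediate extension $k[[z]]^{V_4} = k[[t]]$ with $s = t^3$ after normalizing, and $k[[z]]/k[[t]]$ is a $V_4$-extension on which a generator $\sigma$ of $\ints/3$ (acting by $t \mapsto \zeta t$, where $\zeta \in k$ is a primitive cube root of unity) acts, cyclically permuting the three intermediate $\ints/2$-subextensions. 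Under Artin--Schreier theory these correspond to classes $f, \sigma^* f, (\sigma^2)^* f$ in $k((t))/\wp(k((t)))$ whose sum is zero (reflecting $\chi_1\chi_2\chi_3 = 1$ among the three nontrivial characters of $V_4$), and the symmetry forces a common conductor $d$. Thus the data to be lifted is essentially a single Artin--Schreier class together with its $\ints/3$-symmetry.

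Next I would globalize. By a Harbater--Katz--Gabber type argument, realize the local extension as the completion at a wild point of a connected $A_4$-cover $Y \to \proj^1_s$ branched only over $s = \infty$ (with inertia $A_4$) and, tamely, over $s = 0$ (with inertia $\ints/3$); equivalently $Y \to \proj^1_t$ is a $V_4$-cover of the $\ints/3$-quotient $\proj^1_t \to \proj^1_s$, $s = t^3$, branched only over $t = \infty$ and carrying the $\sigma$-action. Lifting the local extension then reduces to lifting this global cover $A_4$-equivariantly to a cover of $\proj^1_R$ over a suitable mixed-characteristic DVR $R$: the tame point over $s = 0$ lifts without obstruction, and one arranges the $\ints/3$-action on the lift compatibly, the key point being that $\sigma$ act on the lifted $V_4$ as a $3$-cycle (so the global group really is $A_4$, not $V_4 \times \ints/3$).

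Following Pagot's analysis of $V_4$-covers (\cite{Pa:rc}), I would encode the lift by differential data on the generic fiber: three logarithmic differential forms $\omega_1, \omega_2, \omega_3$ on $\proj^1_{\ol K}$, with $K = \Frac(R)$, having prescribed pole orders governed by $d$, integral residues, and a linear relation encoding $\chi_1\chi_2\chi_3 = 1$; such data exactly prescribes a $\mu_2 \times \mu_2$-cover degenerating to the given Artin--Schreier $V_4$-cover. The new feature for $A_4$ is that the $\ints/3$-action must be carried along: I would demand $\omega_2 = \sigma^*\omega_1$ and $\omega_3 = (\sigma^2)^*\omega_1$, so that the entire configuration comes from a single $\sigma$-semi-invariant form $\omega = \omega_1$ whose $\ints/3$-orbit is linearly balanced. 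The crux is therefore to show that Pagot's differential equations admit solutions that are simultaneously $\ints/3$-equivariant --- a strictly smaller solution space than in the bare $V_4$ problem, with no formal reason for the symmetric locus to be nonempty. I expect this to be the main obstacle.

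I would attack it by explicit construction: use the $\sigma$-action (fixing $0$ and $\infty$) to write $\omega$ in $\sigma$-adapted coordinates, reduce the balance relation together with the pole- and residue-normalizations to a finite polynomial system in the positions of the zeros and poles of $\omega$, and exhibit solutions --- either in closed form for a normalized conductor or via a parameter count combined with a non-degeneracy argument (transversality, or non-vanishing of a suitable resultant) that rules out collapse of the configuration. Finally I would verify that the resulting characteristic-zero cover genuinely reduces to the prescribed special fiber, matching the conductor $d$ and the $A_4$-action, by computing the different of the lifted cover (equivalently, analyzing its stable model and associated Hurwitz tree) and checking that it degenerates to $k[[z]]/k[[s]]$. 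Once equivariant differential data are produced for every admissible $d$, Theorem \ref{Tmain} follows.
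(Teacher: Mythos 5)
Your structure theory and globalization steps are fine (they match Lemma \ref{LA4}, Corollary \ref{CA42}, and the HKG-cover setup used in the paper), but the heart of your argument --- producing, for \emph{every} admissible conductor $d$, three logarithmic differential forms forming a single $\ints/3$-orbit and satisfying Pagot's compatibility relations --- is exactly the step you do not carry out, and it is not a step that can be waved through. You acknowledge that the $\ints/3$-equivariant locus inside Pagot's solution space has ``no formal reason'' to be nonempty, and then propose to settle it by ``a parameter count combined with a non-degeneracy argument.'' That is not a proof: for Hurwitz-tree-type obstructions, nonexistence of differential data with prescribed poles, residues, and relations is precisely the mechanism by which candidate groups \emph{fail} to be local Oort, so nonemptiness of a constrained solution locus is the entire mathematical content, and it must be exhibited, not estimated. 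Your route is essentially the proof announced by Bouw (cf.\ the beginning of \cite{BW:ll}) that was never written down; the paper explicitly adopts a different idea in order to \emph{avoid} this machinery, which is strong circumstantial evidence that the equivariant differential-data system is hard to solve uniformly in $d$.

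By contrast, the paper never solves a lifting problem for large conductor directly. It inducts on the break $\nu$ (which satisfies $\nu \equiv 1, 5 \pmod 6$ by Proposition \ref{P1or5}): Pop's equicharacteristic deformation method (Propositions \ref{Pcharpoort}, \ref{Pcharpglobal}, \ref{Pmaindef}) deforms a local $A_4$-extension so that the generic fiber has break $\nu - 6$ plus an extra $\ints/2 \times \ints/2$ branch point with break $1$, and the latter is handled by \cite{Pa:rc} as a black box; the induction then reduces everything to the two base cases $\nu = 1$ and $\nu = 5$, which are dispatched by completely explicit Kummer representatives (Propositions \ref{Pliftform}, \ref{Pbase1}, \ref{Pbase5}), e.g.\ $F = 1 - 4c_1T^{-1}$, $H = 1$ for $\nu = 1$. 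So the explicit construction you defer to the end is, in the paper, only ever needed for $\nu \leq 5$, where it is a finite computation. To salvage your approach you would need to actually solve the equivariant system for all $d$ --- at which point you would have reproved the theorem by a genuinely different (and apparently harder) route --- or else import an induction like the paper's, which makes the differential-data apparatus unnecessary.
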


This result was announced by Bouw (see the beginning of \cite{BW:ll}), but the proof has
not been written down.  Our proof uses a simple idea that
avoids the ``Hurwitz tree'' machinery of \cite{BW:ll}.
Namely, one first classifies local $A_4$-extensions by what we call their ``break'' (this
is a jump in the higher ramification filtration).  One then uses the
following
strategy of Pop (\cite{Po:oc}), sometimes known as the ``Mumford method'': First, make an equicharacteristic deformation
of a local $A_4$-extension such that, generically, the break of the extension
goes down.  If one can lift the local extensions arising from
the generic fiber of this deformation, Pop's work shows that one can lift the original
extension.  On the other hand, we show explicitly that local $A_4$-extensions with small
breaks lift.  An induction finishes the proof.

We remark that Florian Pop has his own similar proof of Theorem \ref{Tmain}, which was
communicated to the author after the first draft of this paper was
written (see Remark \ref{Rpop}).

The main motivation for the local lifting problem is the following
\emph{global lifting problem}, about deformation of curves with an action of
a finite group (or equivalently, deformation of Galois branched covers
of curves).

\begin{prob}(The global lifting problem)\label{glp}
Let $X/k$ be a smooth, connected, projective curve over an
algebraically closed field of characteristic $p$.  Suppose a finite
group $\Gamma$ acts on $X$.  Does $(X, \Gamma)$ lift to characteristic
zero?  That is, does there exist a DVR $R$ of characteristic
zero with residue field $k$ and a relative projective curve $X_R/R$
with $\Gamma$-action such that $X_R$, along with its $\Gamma$-action,
reduces to $X$?
\end{prob}

It is a major result of Grothendieck (\cite[XIII, Corollaire
2.12]{SGA1}) that the global lifting problem can be solved whenever $\Gamma$
acts with \emph{tame} (prime-to-$p$) inertia groups, and $R$ can be taken to
be the Witt ring $W(k)$.  
More generally, the \emph{local-global
  principle} states that $(X,
\Gamma)$ lifts to characteristic zero over a complete DVR
$R$ if and only if the
local lifting problem holds (over $R$) for each point of $X$ with nontrivial
stabilizer in $\Gamma$.  Specifically, if $x$ is such a point, then its
complete local ring is isomorphic to $k[[z]]$. The stabilizer $I_x
\subseteq \Gamma$ acts on $k[[z]]$ by $k$-automorphisms, and we check the local lifting
problem for the local $I_x$-extension $k[[z]]/k[[z]]^{I_x}$.  Thus,
the global lifting problem is reduced to the local lifting problem.
Proofs of the local-global principle have been given by
Bertin-M\'{e}zard (\cite{BM:df}), Green-Matignon (\cite{GM:lg}), and Garuti (\cite{Ga:pr}).  

One consequence of the local-global principle and Theorem \ref{Tmain} is the
following:

\begin{corollary}\label{Cglobal}
The groups $A_4$ and $A_5$ are so-called \emph{Oort groups} for every prime.
That is, if $\Gamma \in \{A_4, A_5\}$ acts on a smooth projective curve $X$ over an algebraically closed field of
positive characteristic $p$, then $(X, \Gamma)$ lifts to characteristic zero.
\end{corollary}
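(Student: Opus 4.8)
The plan is to derive Corollary \ref{Cglobal} from Theorem \ref{Tmain} together with the local-global principle described in the introduction. The key reduction is that the global lifting problem for a group $\Gamma$ acting on a curve $X/k$ in characteristic $p$ is controlled, point-by-point, by the local lifting problem for the inertia groups $I_x \subseteq \Gamma$ at the ramified points. By the local-global principle (due to Bertin--M\'ezard, Green--Matignon, and Garuti), $(X,\Gamma)$ lifts over a complete DVR $R$ if and only if each local $I_x$-extension $k[[z]]/k[[z]]^{I_x}$ lifts over $R$. Hence it suffices to show that for every $\Gamma \in \{A_4, A_5\}$ and every prime $p$, every subgroup of $\Gamma$ that can arise as a local inertia group in characteristic $p$ is a local Oort group for $p$.

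First I would recall the ramification-theoretic constraint stated in the introduction: any group occurring as the inertia group of a local extension in characteristic $p$ has the form $P \rtimes \ints/m$ with $P$ a $p$-group and $p \nmid m$. So the main step is to enumerate, for each prime $p$, which subgroups of $A_4$ and $A_5$ have this form, and then to check that each such subgroup is already known to be local Oort for $p$. For $p \nmid |\Gamma|$ the inertia groups are all cyclic of prime-to-$p$ order, which are tame and lift by Grothendieck's result (\cite[XIII, Corollaire 2.12]{SGA1}); so the interesting cases are $p \in \{2,3,5\}$ (for $A_5$) and $p \in \{2,3\}$ (for $A_4$). In each of these cases one lists the possible inertia subgroups: cyclic subgroups of various orders, and, when $p=2$, the relevant $2$-subgroups and their normalizer-type extensions.

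The heart of the verification is a short case analysis. The subgroups of $A_4$ are $\{1\}$, $\ints/2$, $\ints/3$, the Klein four group $V_4 \cong (\ints/2)^2$, and $A_4$ itself; the subgroups of $A_5$ additionally include $\ints/5$, the symmetric group $S_3 \cong D_3$, the dihedral group $D_5$ of order $10$, and $A_4$. Among these, the ones of the form $P \rtimes \ints/m$ with $P$ a $p$-group that can actually occur as inertia are: cyclic groups (local Oort by the Oort conjecture \cite{OW:ce}, \cite{Po:oc}); dihedral groups of order $2p$, namely $D_3 \cong S_3$ for $p=3$ and $D_5$ for $p=5$ (local Oort by Bouw--Wewers \cite{BW:ll}), and $D_2 = V_4$ is excluded since a local inertia group has cyclic prime-to-$p$ part so for $p=2$ the relevant group is $A_4$ itself; and finally $A_4$ for $p=2$, which is exactly the content of Theorem \ref{Tmain}. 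Crucially, $A_5$ itself is \emph{not} cyclic-by-$p$-group and therefore cannot occur as a local inertia group at any point, so no local lifting statement for all of $A_5$ is required --- only for its proper inertia subgroups, all of which fall into the already-known local Oort cases.

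I expect the main obstacle to be bookkeeping rather than genuine mathematical difficulty: one must be careful to list precisely which subgroups can occur as inertia groups for each prime $p$ (invoking the $P \rtimes \ints/m$ normal form to rule out groups like $V_4$ or $A_5$ that are not of this shape), and then cite the correct prior result for each. The only place where a new theorem is needed is the $A_4$, $p=2$ case, which is supplied by Theorem \ref{Tmain}; every other inertia subgroup is handled by the Oort conjecture or the Bouw--Wewers dihedral result. Assembling these via the local-global principle then yields the lifting of $(X,\Gamma)$ over $W(k)$ (or a suitable finite extension), completing the proof.
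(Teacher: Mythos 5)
Your overall strategy is exactly the paper's: reduce to the local lifting problem via the local--global principle, enumerate the subgroups of $A_4$ and $A_5$ that can occur as inertia groups (i.e.\ those of the form $P \rtimes \ints/m$ with $P$ a $p$-group and $p \nmid m$), and cite the known local Oort results for each. However, your case analysis contains a genuine error: you exclude the Klein four group $V_4 \cong \ints/2 \times \ints/2$ from the list of possible inertia groups for $p = 2$, on the grounds that ``a local inertia group has cyclic prime-to-$p$ part.'' That condition constrains only the tame quotient $\ints/m$, not the $p$-Sylow part $P$; since $V_4$ is itself a $2$-group, it is of the required form $P \rtimes \ints/1$ and can perfectly well occur as the stabilizer of a point of $X$ in characteristic $2$ (for either $\Gamma = A_4$ or $\Gamma = A_5$). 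The paper's proof explicitly includes $\ints/2 \times \ints/2$ in its list and disposes of it using Pagot's theorem that dihedral groups of order $2p$ are local Oort for $p=2$ (note $D_2 \cong V_4$), as recalled in the introduction. This case is not covered by the Oort conjecture (the group is not cyclic) nor by Theorem \ref{Tmain}, so your argument as written leaves a real hole for every point of $X$ whose stabilizer is exactly $V_4$.

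The fix is small: reinstate $\ints/2 \times \ints/2$ in your list of cyclic-by-$p$ subgroups for $p = 2$ and cite \cite{Pa:rc} for its local liftability. With that correction your proof coincides with the paper's. (Also, your closing claim that the lift can be taken over $W(k)$ is not justified by the inputs you use --- the cyclic and $A_4$ cases only produce lifts over some sufficiently large complete DVR --- but your parenthetical ``or a suitable finite extension'' should itself be weakened to ``some complete DVR of characteristic zero,'' which is all the corollary asserts.)
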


\begin{proof}
By the local-global principle (see also \cite[Theorem 2.4]{CGH:og}), it suffices to
show that every cyclic-by-$p$ subgroup of $A_4$ or $A_5$ is a local
Oort group for $p$. The only subgroups of $A_4$ of this form for any
$p$ are isomorphic to the trivial group, $\ints/2$, $\ints/2 \times \ints/2$, $\ints/3$,
or $A_4$.  The subgroups of $A_5$ of this form are isomorphic to the
trivial group, $\ints/2$, $\ints/2 \times \ints/2$, $\ints/3$,
$\ints/5$, $D_3$, $A_4$, and $D_5$.  All these are local Oort groups
for the relevant primes,
as has been noted above.
\end{proof}

\subsection{Coventions/Notation}
 
Throughout, $k$ is an algebraically closed field of characteristic
$2$.  The ring $R$ is a large enough complete discrete valuation ring
of characteristic zero with residue field $k$, maximal ideal $\mf{m}$,
and uniformizer $\pi$.  We normalize the
valuation $v$ on $R$ so that $v(2) = 1$.  In any polynomial or power
series ring with coefficients in $R$, the expression $o(x)$ for $x \in
R$ means a polynomial or power series with coefficients in $x\mf{m}$.

The ring $k[[t]]$ is always a $\ints/3$-extension of $k[[s]]$
with $t^3 = s$. Likewise, $R[[T]]$ is always a $\ints/3$-extension of
$R[[S]]$ with $T^3 = S$.  If $k[[z]]/k[[s]]$ is an extension, it is
always assumed to contain $k[[t]]$.  Our convention for variables is that
lowercase letters represent the reduction of capital letters from
characteristic $0$ to characteristic $2$ (e.g., $t$ is the reduction
of $T$).

We write $\zeta_3$ for a primitive $3$rd root of unity in any ring.

\section{$A_4$-extensions}\label{SA4}
We start with the basic structure theory of $A_4$-extensions.
\subsection{$A_4$-extensions in characteristic $2$}\label{SA4char2}
\begin{lemma}\label{LA4}
Let $K \subseteq L \subseteq M$ be a tower of field extensions of characteristic
$2$ such that $L/K$ is $\ints/3$-Galois and $\Gal(M/L)$ is
$\ints/2$-Galois.  Let $\sigma$ be a generator of $\Gal(L/K)$.  For
$\ell \in L$, let $\ol{\ell}$ denote the image of $\ell$ in
$L/(F-1)L$, where $F$ is Frobenius.  Suppose $M
\cong L[x]/(x^2 - x - a)$, and let $d$ be the dimension of the $\FF_2$-vector space
generated by $\ol{a}, \ol{\sigma(a)}$, and $\ol{\sigma^2(a)}$.  If $N$ is the Galois closure of $M$ over
$L$, then $\Gal(N/K)$ can be expressed as a semi-direct product $\cong (\ints/2)^d \rtimes \ints/3$.
\end{lemma}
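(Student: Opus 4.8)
The plan is to realize $N$ explicitly as a compositum of Artin--Schreier extensions of $L$, compute $\Gal(N/L)$ by Artin--Schreier theory, and then recover $\Gal(N/K)$ from the short exact sequence relating it to $\Gal(L/K)$, using that $2$ and $3$ are coprime. (I read ``the Galois closure of $M$ over $L$'' as the Galois closure over $K$; taken literally over $L$ it would force $N = M$, which cannot yield $\Gal(N/K) \cong (\ints/2)^d \rtimes \ints/3$ as soon as $d > 1$.)

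First I would pin down $N$. Write $\wp := F - 1$ for the Artin--Schreier operator, so that $M = L(\beta)$ with $\wp(\beta) = a$. Since $L/K$ is Galois with group $\langle \sigma \rangle$, every $K$-embedding of $M$ into a fixed algebraic closure restricts to some power $\sigma^i$ on $L$, and an extension of $\sigma^i|_L$ sends $\beta$ to a root of $T^2 - T - \sigma^i(a)$; thus the $K$-conjugates of $M$ are the fields $L(\wp^{-1}(\sigma^i(a)))$ for $i = 0, 1, 2$. Their compositum is
\[
N = L\bigl(\wp^{-1}(a),\, \wp^{-1}(\sigma(a)),\, \wp^{-1}(\sigma^2(a))\bigr),
\]
and because $\sigma$ permutes $a, \sigma(a), \sigma^2(a)$ cyclically, $N$ is $\sigma$-stable; being also separable over $K$, it is Galois over $K$, and it is indeed the Galois closure of $M$.

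Next I would compute $\Gal(N/L)$. By Artin--Schreier theory over $L$, the finite elementary abelian $2$-extensions of $L$ correspond to the finite $\FF_2$-subspaces of $L/\wp(L) = L/(F-1)L$, with the Galois group being the $\FF_2$-dual of the corresponding subspace and hence of equal dimension. The extension $N/L$ corresponds to the subspace spanned by $\ol{a}, \ol{\sigma(a)}, \ol{\sigma^2(a)}$, which has dimension $d$ by hypothesis; therefore $\Gal(N/L) \cong (\ints/2)^d$.

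Finally I would assemble the group. Restriction to $L$ gives the exact sequence
\[
1 \longrightarrow \Gal(N/L) \longrightarrow \Gal(N/K) \longrightarrow \Gal(L/K) \longrightarrow 1,
\]
that is, $1 \to (\ints/2)^d \to \Gal(N/K) \to \ints/3 \to 1$. Since $|\Gal(N/L)| = 2^d$ and $|\Gal(L/K)| = 3$ are coprime, Schur--Zassenhaus splits this sequence, giving $\Gal(N/K) \cong (\ints/2)^d \rtimes \ints/3$. I expect the only real subtlety to be the bookkeeping in the first step: verifying that the Galois closure is exactly the compositum of the three Artin--Schreier conjugates, so that the relevant subspace of $L/(F-1)L$ is precisely the $\sigma$-span of $\ol{a}$. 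Once that is secured, the Artin--Schreier count and the coprimality argument are routine.
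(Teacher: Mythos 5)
Your proposal is correct and follows essentially the same route as the paper's proof, which likewise reduces to showing $\Gal(N/L) \cong (\ints/2)^d$ via Schur--Zassenhaus and then invokes Artin--Schreier theory; you have simply filled in the details (and your reading of ``Galois closure over $K$'' is indeed the intended one). No changes needed.
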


\begin{proof}
By the Schur-Zassenhaus theorem, it is enough to prove that $\Gal(N/L)
\cong (\ints/2)^d$.  But $N/L$ is clearly generated by Artin-Schreier
roots of $a$, $\sigma(a)$, and $\sigma^2(a)$.  Thus the result follows
from Artin-Schreier theory.
\end{proof}

\begin{corollary}\label{CA4}
If $d = 2$ in Lemma \ref{LA4}, then $\Gal(N/K) \cong A_4$.
\end{corollary}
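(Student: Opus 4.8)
The plan is to start from Lemma \ref{LA4}, which for $d = 2$ already yields an isomorphism $\Gal(N/K) \cong (\ints/2)^2 \rtimes \ints/3$; what remains is only to identify which semidirect product occurs. There are exactly two groups of the form $(\ints/2)^2 \rtimes \ints/3$ up to isomorphism: the abelian group $\ints/2 \times \ints/6$, arising from the trivial action, and $A_4$, arising from any nontrivial action. Indeed $\Aut((\ints/2)^2) \cong GL_2(\FF_2) \cong S_3$ contains a unique subgroup of order $3$, so every nontrivial homomorphism $\ints/3 \to \Aut((\ints/2)^2)$ has the same image, and the resulting semidirect products are all isomorphic to $A_4$. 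Thus the entire problem reduces to showing that the conjugation action of $\ints/3 = \langle \sigma \rangle$ on $\Gal(N/L) \cong (\ints/2)^2$ is nontrivial.

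To see this, I would track the Galois action through the Artin-Schreier data. By hypothesis the images $\ol{a}, \ol{\sigma(a)}, \ol{\sigma^2(a)}$ span a $2$-dimensional $\FF_2$-space $W \subseteq L/(F-1)L$, and by Artin-Schreier theory $\Gal(N/L)$ is canonically the $\FF_2$-dual of $W$, with the conjugation action of $\sigma$ being the contragredient of its action $w \mapsto \sigma(w)$ on $W$ (here $\sigma$ commutes with Frobenius, so it acts on $L/(F-1)L$ and $\ol{\sigma^i(a)} = \sigma^i(\ol{a})$). It therefore suffices to show that $\sigma$ acts nontrivially on $W$, i.e.\ that $\sigma$ cyclically permutes the three classes $\ol{a}, \ol{\sigma(a)}, \ol{\sigma^2(a)}$ without fixing them.

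The key observation is that these three classes are distinct and nonzero. None is zero: if $\ol{a} = 0$ then $a \in (F-1)L$ and $M/L$ would be trivial, contradicting $\Gal(M/L) = \ints/2$, and applying the automorphism $\sigma$ shows $\ol{\sigma(a)}$ and $\ol{\sigma^2(a)}$ are nonzero as well. They are pairwise distinct: an equality such as $\ol{a} = \ol{\sigma(a)}$ would, upon applying $\sigma$, force all three classes to coincide, whence $\dim_{\FF_2} W = 1$, contradicting $d = 2$ (the other two equalities are handled identically). Since $\FF_2^2$ has exactly three nonzero vectors, the classes $\ol{a}, \ol{\sigma(a)}, \ol{\sigma^2(a)}$ must be precisely these three vectors, and $\sigma$ permutes them in a $3$-cycle. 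In particular $\sigma$ acts nontrivially on $W$, hence on $\Gal(N/L)$, so the semidirect product is $A_4$.

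I expect the only delicate point to be the passage from the permutation action on the Artin-Schreier classes to the conjugation action on $\Gal(N/L)$: one must invoke the correct functoriality of Artin-Schreier theory, namely that the Galois action on $\Gal(N/L)$ is the contragredient of the action on $W$, and in particular that a nontrivial action downstairs forces a nontrivial action upstairs. Everything else is elementary group theory together with linear algebra over $\FF_2$.
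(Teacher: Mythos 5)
Your proof is correct, and it shares the paper's overall skeleton --- both arguments start from Lemma \ref{LA4} to get $\Gal(N/K) \cong (\ints/2)^2 \rtimes \ints/3$ and then use the fact that, up to isomorphism, the only nonabelian (equivalently, nontrivial) such semidirect product is $A_4$, since $\Aut((\ints/2)^2) \cong S_3$ has a unique subgroup of order $3$. Where you diverge is in how you certify that the relevant semidirect product is the nontrivial one. The paper does this in one line of Galois theory: since $d = 2$, the field $M$ is a proper subfield of its Galois closure $N$ over $K$, so $M/K$ is a non-Galois intermediate extension, which is impossible if $\Gal(N/K)$ is abelian. You instead verify directly that $\sigma$ acts nontrivially on $\Gal(N/L)$, by showing that the three classes $\ol{a}, \ol{\sigma(a)}, \ol{\sigma^2(a)}$ are distinct and nonzero, hence exhaust the nonzero vectors of the $2$-dimensional space $W$ and are cyclically permuted by $\sigma$, and then transporting this to $\Gal(N/L)$ via the $\sigma$-equivariance (contragredient) of the Artin--Schreier pairing. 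Both routes are sound; the paper's is shorter and avoids any appeal to the functoriality of Artin--Schreier duality, while yours is more explicit and has the small side benefit of exhibiting the $3$-cycle action on the three involutions of $\Gal(N/L)$ concretely, which is the structure exploited later (e.g.\ in the proof of Proposition \ref{Pliftform}, where the three conjugate quadratic subextensions appear explicitly). The one point you flag as delicate --- that a nontrivial action on $W$ forces a nontrivial conjugation action on $\Gal(N/L)$ --- is indeed the step that needs the equivariance of the pairing, and your statement of it is accurate; note that the paper's route sidesteps this entirely.
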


\begin{proof}
The group $\Gal(N/K)$ must be a semi-direct
product $(\ints/2)^2 \rtimes \ints/3$ that is nonabelian (as
there exists a non-Galois subextension).  The only such group is $A_4$.
\end{proof}

If $K = k((s))$ in Lemma \ref{LA4} above, then after a change of
variable, we may assume that $L = k((t))$ with $t^3 = s$.  Then,
it is easy to see that an Artin-Schreier representative $a$ of $M/L$ may be chosen uniquely
such that $a \in t^{-1}k[t^{-1}]$ and $a$ has only odd-degree terms.
We say that such an $a$ is in \emph{standard form}.
In this case, a standard exercise shows that the break in the higher
ramification filtration of $M/L$ (i.e., the largest $i$ such that
the higher ramification group $G_i$ is nontrivial)
occurs at $\deg(a)$, thought of as a polynomial in $t^{-1}$.  

\begin{corollary}\label{CA42}
Suppose $K = k((s))$ and $L = k((t))$.  Suppose $a \in
t^{-1}k[t^{-1}] \subseteq L$ is in standard form.  Using the notation
of Lemma \ref{LA4}, we have $\Gal(N/K) \cong A_4$ if and only if $a$
has no nonzero terms of degree divisible by $3$.
\end{corollary}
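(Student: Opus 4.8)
By Corollary \ref{CA4}, it suffices to show that $d = 2$ precisely when $a$ has no nonzero term of degree divisible by $3$. The plan is to compute the $\FF_2$-vector space of linear relations among $\ol{a}$, $\ol{\sigma(a)}$, and $\ol{\sigma^2(a)}$ inside $L/(F-1)L$ and to read off its dimension as a function of the exponents occurring in $a$.

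First I would pin down the action of $\sigma$. Since $\sigma$ fixes $s = t^3$ and $k \supseteq \FF_4$ contains a primitive cube root of unity, we may take $\sigma(t) = \zeta_3 t$, so that $\sigma(t^{-j}) = \zeta_3^{-j} t^{-j}$ and $\sigma^2(t^{-j}) = \zeta_3^{-2j} t^{-j}$. Writing $a = \sum_j c_j t^{-j}$ (the sum running over odd $j$), it follows that each of $\sigma(a)$ and $\sigma^2(a)$ is again in standard form, because $\sigma$ merely scales each monomial by a root of unity and hence preserves both membership in $t^{-1}k[t^{-1}]$ and the oddness of every exponent.

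The next step, and the technical heart of the argument, is the observation that a standard-form element lies in $(F-1)L$ if and only if it is zero: the reduction $t^{-2m} \equiv t^{-m} \bmod (F-1)L$ shows that every Artin-Schreier class has a unique standard representative, so standard-form elements inject into $L/(F-1)L$. Consequently an $\FF_2$-linear combination $\epsilon_0 \ol{a} + \epsilon_1 \ol{\sigma(a)} + \epsilon_2 \ol{\sigma^2(a)}$ vanishes in $L/(F-1)L$ if and only if the honest identity $\epsilon_0 a + \epsilon_1 \sigma(a) + \epsilon_2 \sigma^2(a) = 0$ holds in $L$; comparing the coefficient of each $t^{-j}$, this is equivalent to requiring $\epsilon_0 + \epsilon_1 \zeta_3^{-j} + \epsilon_2 \zeta_3^{-2j} = 0$ for every $j$ with $c_j \neq 0$.

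Finally I would solve these relations using $1 + \zeta_3 + \zeta_3^2 = 0$ together with the fact that $\{1, \zeta_3\}$ is an $\FF_2$-basis of $\FF_4$. If $3 \mid j$ the condition reads $\epsilon_0 + \epsilon_1 + \epsilon_2 = 0$, whereas if $3 \nmid j$ it forces $\epsilon_0 = \epsilon_1 = \epsilon_2$. Hence, when the nonzero element $a$ has no term of degree divisible by $3$, the relation space is exactly $\{(0,0,0),(1,1,1)\}$, of dimension $1$, so $d = 3 - 1 = 2$; while if some nonzero term has degree divisible by $3$, the relation space either drops to $\{(0,0,0)\}$ (dimension $0$, giving $d = 3$) when a term of degree prime to $3$ also occurs, or equals the plane $\epsilon_0 + \epsilon_1 + \epsilon_2 = 0$ (dimension $2$, giving $d = 1$) when every term has degree divisible by $3$. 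In each case $d \neq 2$, which completes the equivalence. I expect the only genuine subtlety to be the uniqueness-of-standard-form step invoked above; the remaining $\FF_4$-linear algebra is routine bookkeeping.
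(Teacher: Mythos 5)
Your proposal is correct and follows essentially the same route as the paper: reduce via Lemma \ref{LA4} and Corollary \ref{CA4} to computing $d$, use that nonzero standard-form elements cannot lie in $(F-1)L$ (a nonzero element of $t^{-1}k[t^{-1}]$ in standard form has odd pole order, while $x^2-x$ has even pole order or no pole), and then determine the $\FF_2$-relations among $a$, $\sigma(a)$, $\sigma^2(a)$ according to whether the exponents are divisible by $3$. You simply make explicit the $\FF_4$-linear algebra that the paper's proof asserts in one line, and your three-case count of $d$ ($2$, $3$, or $1$) matches the paper's dichotomy exactly.
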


\begin{proof}
Since linear combinations of elements of $L$ in standard form are also
in standard form, Lemma \ref{LA4} and Corollary \ref{CA4} imply that
$\Gal(N/K) = A_4$ if and only if the $\FF_2$-subspace $V$ of $L$ generated by $a$, $\sigma(a)$, and
$\sigma^2(a)$ has dimension $2$.  If $a$ has no nonzero terms of degree
divisible by $3$, then $a + \sigma(a) + \sigma^2(a) = 0$ is the only
$\FF_2$-linear relation that holds among the conjugates of $a$, so
$\dim V = 2$ (note
that $a \neq 0$ since it is an Artin-Schreier representative of
$M/L$).  Conversely, if $a$ has a nonzero term of degree divisible by
$3$, then either no $\FF_2$-linear relation holds, or $a \in k((s))$ (in which
case $a = \sigma(a) = \sigma^2(a)$).  In either case, $\dim V \neq 2$. 
\end{proof}

If $d = 2$ in the context of Lemma \ref{LA4}, we say that $a \in L$
\emph{gives rise to} the $A_4$-extension $N/K$.  By abuse of notation,
if $K \cong k((s))$, we say that the \emph{break} of $N/K$ is the
ramification break of $M/L$.  This is the same as the unique
ramification break of $N/L$ in either the upper or lower numbering.  Furthermore, if $K = k((s))$ and $N =
k((z))$, we also say that $a$ gives rise to the extension $k[[z]]/k[[s]]$.

\begin{prop}\label{P1or5}
If $K = k((s))$ and $N/K$ is an $A_4$-extension with break $\nu$,
then $\nu \equiv 1 \text{ or } 5 \pmod{6}$.
\end{prop}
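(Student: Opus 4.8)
The plan is to read the congruence condition on $\nu$ directly off the standard-form normalization of the Artin-Schreier representative, combined with the $A_4$-criterion of Corollary \ref{CA42}. First I would invoke the reduction already set up in the excerpt: after a change of variable we may take $L = k((t))$ with $t^3 = s$, and the $A_4$-extension $N/K$ arises from an element $a \in t^{-1}k[t^{-1}]$ in standard form. The discussion preceding Corollary \ref{CA42} identifies the break $\nu$ of $N/K$ (i.e.\ the ramification break of $M/L$) with $\deg(a)$, where $a$ is regarded as a polynomial in $t^{-1}$. So it suffices to understand the divisibility properties of this degree.

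Next I would extract two facts about $\nu = \deg(a)$. Because $a$ is in standard form, it has only odd-degree terms; in particular its top-degree term has odd degree, so $\nu$ is odd. Because $\Gal(N/K) \cong A_4$, Corollary \ref{CA42} tells us that $a$ has no nonzero terms of degree divisible by $3$. The leading term of $a$ is a nonzero term of degree exactly $\nu$ (it is nonzero since $a$ is an Artin-Schreier representative of the nontrivial extension $M/L$, so $a \neq 0$), whence $3 \nmid \nu$.

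Finally I would combine these two constraints: an integer that is odd and not divisible by $3$ lies in a residue class mod $6$ that is neither even nor a multiple of $3$, and the only such classes are $1$ and $5$. Thus $\nu \equiv 1$ or $5 \pmod 6$. I do not anticipate any real obstacle here, since the entire content of the proposition is encoded in the standard-form normalization together with the $A_4$ criterion; the only point requiring a word of care is the nonvanishing of the leading coefficient of $a$, which I would justify by noting that $a \neq 0$ as an Artin-Schreier representative.
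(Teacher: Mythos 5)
Your proposal is correct and follows exactly the paper's argument: identify $\nu$ with $\deg(a)$ for $a$ in standard form, note that standard form forces $\nu$ odd, and use Corollary \ref{CA42} to rule out $3 \mid \nu$. The paper's proof is a two-line version of the same reasoning, so there is nothing to add.
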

\begin{proof}
If $a$ gives rise to $N/K$ and is in standard form, we know that $\nu$
is the degree of $a$ in $t^{-1}$.  This must
be odd, and by Corollary \ref{CA42}, it cannot be divisible by $3$.
\end{proof}

\subsection{$A_4$-extensions in characteristic zero}\label{SA4char0}
The story in characteristic zero (or odd characteristic) is completely analogous. We state the result for
reference and omit the proof, which is the same as in
\S\ref{SA4char2} with Kummer theory substituted for Artin-Schreier theory.

\begin{prop}\label{PA4char0}
Let $K \subseteq L \subseteq M$ be a tower of separable field extensions of characteristic
$\neq 2$ such that $L/K$ is $\ints/3$-Galois and $\Gal(M/L)$ is
$\ints/2$-Galois. Let $\sigma$ be a generator of $\Gal(L/K)$.  For
$\ell \in L^{\times}$, let $\ol{\ell}$ denote the image of $\ell$ in
$L^{\times}/(L^{\times})^2$.  Suppose $M
\cong L[x]/(x^2 - a)$, 
and let $d$ be the dimension of the $\FF_2$-subspace of $L^\times/(L^\times)^2$
generated by $\ol{a}, \ol{\sigma(a)}$, and $\ol{\sigma^2(a)}$.  If $N$ is the Galois closure of $M$ over
$L$, then $\Gal(N/K)$ can be expressed as a semi-direct product $\cong (\ints/2)^d \rtimes \ints/3$.
In particular, if $d = 2$, then $\Gal(N/K) \cong A_4$.
\end{prop}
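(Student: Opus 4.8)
The plan is to mimic the characteristic-$2$ argument of Lemma \ref{LA4} verbatim, replacing Artin-Schreier theory with Kummer theory. By the Schur-Zassenhaus theorem, since $\gcd(|(\ints/2)^d|, 3) = 1$, the extension $1 \to \Gal(N/L) \to \Gal(N/K) \to \Gal(L/K) \to 1$ splits, so it suffices to identify $\Gal(N/L)$ with $(\ints/2)^d$ and then verify that the resulting semidirect product is $A_4$ when $d = 2$.

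First I would observe that $N$, being the Galois closure of $M = L[x]/(x^2-a)$ over $L$, is by definition the compositum of the conjugate extensions $M^{\sigma^i} = L[x]/(x^2 - \sigma^i(a))$ for $i = 0, 1, 2$; indeed $N/K$ is Galois, so $\Gal(L/K)$ permutes the three quadratic extensions of $L$ cut out by $a$, $\sigma(a)$, $\sigma^2(a)$, and $N$ is generated over $L$ by their square roots. Since $\mathrm{char}(L) \neq 2$ and (as the field extensions are separable) $L$ contains no obstruction to $\ints/2$-Kummer theory, the correspondence between elementary abelian $2$-extensions of $L$ and $\FF_2$-subspaces of $L^\times/(L^\times)^2$ applies directly. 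Kummer theory then gives $\Gal(N/L) \cong (\ints/2)^d$, where $d$ is exactly the dimension of the subspace generated by $\ol{a}, \ol{\sigma(a)}, \ol{\sigma^2(a)}$, as in the statement. This establishes the general semidirect-product claim $\Gal(N/K) \cong (\ints/2)^d \rtimes \ints/3$.

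For the final assertion, suppose $d = 2$. The argument is identical to Corollary \ref{CA4}: the group $\Gal(N/K)$ is a semidirect product $(\ints/2)^2 \rtimes \ints/3$, and it must be nonabelian because the three quadratic subextensions $M^{\sigma^i}$ of $L$ are genuinely permuted by $\sigma$ (the $\ints/3$-action on $(\ints/2)^2$ is nontrivial, since otherwise $a, \sigma(a), \sigma^2(a)$ would represent the same class and force $d \leq 1$). The only nonabelian group of the form $(\ints/2)^2 \rtimes \ints/3$ is $A_4$, so $\Gal(N/K) \cong A_4$.

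I expect no serious obstacle here, since the statement is explicitly flagged as completely analogous to the characteristic-$2$ case. The one point demanding mild care is confirming that the nontriviality of the $\ints/3$-action — equivalently, the nonabelianness of the semidirect product — follows from $d = 2$ rather than needing to be assumed separately; this is precisely the observation that $\sigma$ cannot fix all three classes $\ol{a}, \ol{\sigma(a)}, \ol{\sigma^2(a)}$ while spanning a $2$-dimensional space. Everything else is a mechanical transcription in which the additive group $L/(F-1)L$ of Artin-Schreier theory is replaced by the multiplicative group $L^\times/(L^\times)^2$ of Kummer theory, which is exactly why the authors elect to omit the proof.
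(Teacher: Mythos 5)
Your proposal is correct and follows exactly the route the paper intends: the paper explicitly omits this proof as being the argument of Lemma \ref{LA4} and Corollary \ref{CA4} with Kummer theory (using $L^\times/(L^\times)^2$ and the fact that $\mu_2 \subseteq L$ since $\mathrm{char}(L) \neq 2$) substituted for Artin--Schreier theory, which is precisely what you carry out, including the Schur--Zassenhaus splitting and the identification of $A_4$ as the unique nonabelian $(\ints/2)^2 \rtimes \ints/3$. Your observation that $d=2$ forces the $\ints/3$-action to be nontrivial is the same point the paper makes via the existence of a non-Galois subextension, so no gap remains.
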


In the context of Proposition \ref{PA4char0}, we again say that $a \in
L$ \emph{gives rise} to $N/K$.



\section{Characteristic $2$ deformations}\label{Sdef}

For this section, let $K$, $L$, $M$, $N$ be as in \S\ref{SA4char2},
with $K = k((s))$ and $L = k((t))$.  Let $N = k((z))$.  Suppose
$\Gal(N/K) \cong A_4$, and $N/K$ is given rise to by $a \in
t^{-1}k[t^{-1}]$ in standard form.  Let $\nu$ be the break
of $N/K$.  Our goal is to prove the following proposition.

\begin{prop}\label{Pmaindef}
Suppose that $\nu > 6$ and all $A_4$-extensions $N'/K$ with break $\leq \nu -
6$ lift to characteristic zero.  Then $N/K$ lifts to
characteristic zero.
\end{prop}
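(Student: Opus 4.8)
The plan is to apply the ``Mumford method'' of Pop. The first step is to put $N/K$ into a global setting: using the Harbater--Katz--Gabber construction, realize $N/K$ as the completion at $s=0$ of a global $A_4$-cover $Y \to \proj^1_k$ of the projective $s$-line that is totally (wildly) ramified over $s=0$ with local extension $N/K$, and tamely ramified over $s=\infty$ with inertia the prime-to-$2$ quotient $\ints/3$. Such a cover is determined by the standard-form datum $a \in t^{-1}k[t^{-1}]$ together with the relation $t^3=s$ (recall $\sigma(t)=\zeta_3 t$), and its only wild ramification sits over $s=0$ with break $\nu$.

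The heart of the argument is to construct an explicit \emph{equicharacteristic} deformation of this cover over $\Spec k[[w]]$, i.e.\ a family $\mathcal{Y} \to \proj^1_{k[[w]]}$ whose special fiber $(w=0)$ is $Y \to \proj^1_k$ and whose generic fiber has strictly smaller breaks. Concretely, I would deform the Artin--Schreier element $a(t)$ to a family $a_w(t)$, with $a_0=a$, that ``splits'' the order-$\nu$ pole at $t=0$: the leading monomial $c\,t^{-\nu}$ and, as forced by the $\ints/3$-action, its conjugates $\sigma(c\,t^{-\nu})$, $\sigma^2(c\,t^{-\nu})$, are replaced by $w$-dependent terms whose poles move off $t=0$ for $w\neq 0$ and coalesce only as $w\to 0$. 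Passing to the generic point and then to $\overline{k((w))}$, the deformed cover is again an $A_4$-cover of $\proj^1$ whose wild ramification is now distributed among finitely many points. One then checks, using Corollary \ref{CA42} and Proposition \ref{P1or5}, that each resulting local extension is genuinely of type $A_4$ --- that is, each new break is odd and prime to $3$, hence $\equiv 1,5 \pmod 6$ --- and that the break over the specialized point has dropped from $\nu$ to $\nu-6$. The appearance of $6=2\cdot 3$ is exactly the point: passing to standard form involves the Frobenius (squaring), which forces odd breaks, while the $t^3=s$ structure forces breaks prime to $3$, so the smallest honest drop compatible with remaining an $A_4$-extension is by a full period of $6$. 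This is why the hypothesis is phrased for breaks $\le \nu-6$, and it is consistent since $\nu-6 \equiv \nu \equiv 1,5 \pmod 6$ is again an admissible break.

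With the deformation in hand, the conclusion is formal. Every local $A_4$-extension arising on the generic fiber has break $\le \nu-6$, so by hypothesis each lifts to characteristic zero; the characteristic-zero side of each such lift is organized exactly as in Proposition \ref{PA4char0} (Kummer theory in place of Artin--Schreier theory). Pop's deformation result (\cite{Po:oc}; compare its use in \cite{OW:ce}) then asserts that, once all the local extensions on the generic fiber of an equicharacteristic family lift, the entire family lifts to characteristic zero, and in particular so does the special fiber $w=0$. Since the special fiber is $N/K$, this proves the proposition.

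The main obstacle is the construction in the second step: one must simultaneously guarantee (i) that the family specializes correctly to $N/K$ at $w=0$, (ii) that the generic break drops all the way to $\nu-6$, and (iii) that \emph{every} local extension produced on the generic fiber is again of type $A_4$ rather than degenerating to an abelian $(\ints/2)^d\rtimes\ints/3$ with $d\neq 2$. The delicate point is controlling the reduction to standard form modulo $(F-1)L$ \emph{within the family over} $k[[w]]$, while keeping the $\ints/3$-action intact so that the generic Galois group does not collapse; the constraints of Corollary \ref{CA42} are precisely what pin the reduction amount at $6$ and force the admissibility of the resulting breaks.
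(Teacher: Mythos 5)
Your overall strategy---pass to the Harbater--Katz--Gabber cover, make an equicharacteristic deformation over $k[[\varpi]]$ that splits off part of the pole of $a$ so the break at $s=0$ drops to $\nu-6$, then invoke Pop's gluing and specialization machinery from \cite{Po:oc}---is exactly the one the paper follows. But there is a genuine error at the heart of your step (iii): you require that \emph{every} local extension appearing on the generic fiber be ``genuinely of type $A_4$,'' with breaks $\equiv 1,5 \pmod 6$, so that the induction hypothesis applies to all of them. This is impossible. A point of the generic fiber can have inertia group $A_4$ only if it is fixed by an element of order $3$, and the only such point in the relevant affine chart is the one above $t=0$. The new branch points created by moving poles off $t=0$ form $\sigma$-orbits of length $3$, so their inertia groups are $2$-groups, necessarily $\ints/2$ or $\ints/2\times\ints/2$. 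In the paper's deformation ($\tilde a = (a/s^{-2})(s-\mu^3)^{-2}$) the new branch ideal $(s-\mu^3)$ has inertia $\ints/2\times\ints/2$ with break $1$. These points are \emph{not} covered by the induction hypothesis; one must separately know that $\ints/2$ and $\ints/2\times\ints/2$ are local Oort for $p=2$ (Pagot, \cite{Pa:rc}, plus tame ramification for the $\ints/3$ point at infinity) before the local-global principle can be applied to lift the generic fiber. Your proposal omits this ingredient entirely, and without it the argument does not close.

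Two further points are acknowledged in your last paragraph as ``obstacles'' but never resolved, and they are where the real work lies. First, you must verify that the deformation does not change the special fiber, i.e.\ that the integral closure of $k[[\varpi,s]]$ in the deformed extension is $k[[\varpi,z]]$; the paper does this by checking that the total degree of the different is constant in the family ($3(\nu-5)+18 = 3(\nu+1)$), via \cite[I, Theorem 3.4]{GM:lg}. Second, the conclusion is not quite ``formal'': gluing the family over $k[[\varpi]]$ to the characteristic-zero lifts of the generic fiber only produces a lift over a \emph{rank-two} valuation ring, and one needs Pop's specialization argument (\cite[Proposition 4.7]{Po:oc}), together with the observation that all $A_4$-extensions of given break are parameterized by an affine space of Artin--Schreier coefficients, to descend to a discrete valuation ring. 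Your appeal to ``Pop's deformation result'' papers over this step.
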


Our proof follows an idea of Pop (\cite{Po:oc}).  As in \cite{Po:oc}
and \cite{Ob:go}, we make a deformation in characteristic
$2$ so that the generic fiber has ``milder'' ramification than the
special fiber.

\begin{prop}\label{Pcharpoort}
Let $\mc{A} = k[[\varpi, s]] \supseteq k[[s]]$, and let $\mc{K} = \Frac(\mc{A})$.  
There exists an $A_4$-extension $\mc{N}/\mc{K}$, with $\mc{N} \supseteq N$, having the following properties:

\begin{enumerate}
\item The unique $\ints/3$-subextension $\mc{L}/\mc{K}$ of $\mc{N}/\mc{K}$ 
is given by $\mc{L} = \mc{K}[t] \subseteq \mc{N}$.
\item If $\mc{C}$ is the integral closure of $\mc{A}$ in $\mc{N}$, we have $\mc{C} \cong k[[\varpi, z]]$. 
In particular, $(\mc{C}/(\varpi))/(\mc{A}/(\varpi))$ is $A_4$-isomorphic to the original extension $k[[z]]/k[[s]]$.
\item Let $\mc{B} = \mc{A}[t] \subseteq \mc{L}$.  Let $\mc{R} = \mc{A}[\varpi^{-1}]$, let $\mc{S} = \mc{B}[\varpi^{-1}]$, 
and let $\mc{T} = \mc{C}[\varpi^{-1}]$.  
Then $\mc{T}/\mc{R}$ is an $A_4$-extension of Dedekind rings, branched at $2$ maximal ideals.  Above the ideal
$(s)$, the inertia group is $A_4$, and the break is $\nu - 6$.  The other branched ideal has
inertia group $\ints/2 \times \ints/2$, unique ramification break $1$, and can be chosen to
be of the form $(s - \mu^3)$, where $\mu \in \varpi^2 k[[\varpi^2]]
\backslash \{0\}$ is arbitrary.      
\end{enumerate}
\end{prop}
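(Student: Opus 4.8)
The plan is to realize the deformation by an explicit, $\sigma$-equivariant rescaling of the Artin-Schreier datum. Write $a = c_\nu t^{-\nu} + \cdots$ in standard form, with $c_\nu \neq 0$, and set
$$ \tilde{a} \;=\; \frac{s^2}{(s-\mu^3)^2}\,a \;=\; \frac{t^6}{(t^3-\mu^3)^2}\,a \;\in\; \mc{L}. $$
The scaling factor lies in $K \subseteq \mc{K}$, being a rational function of $s = t^3$, and is therefore fixed by $\sigma$, so $\sigma^i(\tilde a) = \frac{s^2}{(s-\mu^3)^2}\sigma^i(a)$. I would let $\mc{N}$ be the Galois closure over $\mc{L}$ of $\mc{L}[x]/(x^2 - x - \tilde a)$ and check, exactly as in Lemma \ref{LA4} and Corollary \ref{CA42}, that $\tilde a$ gives rise to an $A_4$-extension $\mc{N}/\mc{K}$ whose unique $\ints/3$-subextension is $\mc{K}[t] = \mc{L}$; this is property (1). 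Since $\mu \in \varpi^2 k[[\varpi^2]]$ vanishes at $\varpi = 0$, reduction modulo $\varpi$ sends $\frac{s^2}{(s-\mu^3)^2} \mapsto 1$, hence $\tilde a \mapsto a$, so the special fiber of the deformation is $A_4$-isomorphic to the original extension.

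For property (3) I would analyze the poles of $\tilde a$ on the generic fiber, that is, after inverting $\varpi$. The factor $\frac{s^2}{(s-\mu^3)^2}$ has a zero of order $6$ (in $t$) at $t = 0$ and double poles at the three points $t = \mu, \zeta_3\mu, \zeta_3^2\mu$ lying over $s = \mu^3$, while $a$ has its only pole, of order $\nu$, at $t = 0$. Thus $\tilde a$ is regular away from $(s)$ and $(s - \mu^3)$, so $\mc{T}/\mc{R}$ is branched at exactly these two maximal ideals. Near $t = 0$ the leading term of $\tilde a$ is $c_\nu \mu^{-6} t^{-(\nu-6)}$, of odd degree prime to $3$; since $\nu - 6 \equiv \nu \pmod 6$ this is again a standard-form leading term, so (working over the residue field $k((\varpi))$, or after passing to its algebraic closure) the inertia at $(s)$ is $A_4$ with break $\nu - 6$. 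At $s = \mu^3$ the $\ints/3$-cover $t^3 = s$ is unramified, so the inertia there lies in $\Gal(\mc{N}/\mc{L}) \cong (\ints/2)^2$; using $t^3 - \mu^3 \equiv (t - \mu)\mu^2$ near $t = \mu$, the three conjugates $\tilde a, \sigma\tilde a, \sigma^2\tilde a$ have double poles at $t = \mu$ with leading coefficients equal, up to the common nonzero factor $\mu^2$, to $a(\mu), a(\zeta_3\mu), a(\zeta_3^2\mu)$. Each of these is nonzero (its $\varpi$-leading term is a nonzero multiple of $\mu^{-\nu}$), and their sum vanishes because $3 \nmid j$ for every exponent $j$ occurring in $a$. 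After the Artin-Schreier reduction $\wp(y) = y^2 - y$ replacing each order-$2$ pole by a simple pole, all three nontrivial characters of $(\ints/2)^2$ therefore ramify with break $1$; hence the inertia at $(s - \mu^3)$ is exactly $\ints/2 \times \ints/2$ with unique ramification break $1$, as claimed.

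Finally, property (2) would follow from a regularity argument: $\mc{C}$ is the integral closure of the complete regular local ring $\mc{A} = k[[\varpi, s]]$ in $\mc{N}$, hence a finite, normal, $2$-dimensional ring. Because the reduction computed above identifies $\mc{C}/(\varpi)\mc{C}$ with the integral closure $k[[z]]$ of $k[[s]]$ in the original $A_4$-extension, and $k[[z]]$ is a local domain, $\mc{C}$ is local with regular special fiber; flatness over the DVR $k[[\varpi]]$ then forces $\mc{C}$ to be regular (a regular system of parameters of $\mc{C}/(\varpi)$ together with $\varpi$ is one for $\mc{C}$), and lifting a uniformizer $z$ of $k[[z]]$ gives $\mc{C} \cong k[[\varpi, z]]$ with the asserted $A_4$-isomorphism on the special fiber. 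The main obstacle I anticipate is the residue computation at the new branch point: one must verify not merely that $\tilde a$ acquires poles at $s = \mu^3$, but that after Artin-Schreier reduction none of the three $(\ints/2)^2$-characters becomes unramified and the break is exactly $1$, equivalently that $a(\mu), a(\zeta_3\mu), a(\zeta_3^2\mu)$ are all nonzero; and, in tandem, that the break at $(s)$ is exactly $\nu - 6$ rather than merely $\leq \nu - 6$. The remaining verifications (that only two branch points occur, and that $\mc{C}$ is regular) are comparatively routine once the local pole structure of $\tilde a$ is pinned down.
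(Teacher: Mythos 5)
Your deformation is literally the paper's: $\tilde a = \frac{s^2}{(s-\mu^3)^2}a = a'(s-\mu^3)^{-2}$ with $a'=as^2$, and your treatment of property (1), of the branch locus, and of the break at $(s)$ matches the paper's. However, your ramification analysis at the new branch points has a genuine gap. In characteristic $2$, an Artin--Schreier class with principal part $C_2(t-\mu)^{-2}+C_1(t-\mu)^{-1}$ is equivalent, after subtracting $\wp\bigl(\sqrt{C_2}\,(t-\mu)^{-1}\bigr)$, to one with simple-pole coefficient $C_1+\sqrt{C_2}$, and this can vanish even when $C_2\neq 0$: for instance $(t-\mu)^{-2}+(t-\mu)^{-1}=\wp\bigl((t-\mu)^{-1}\bigr)$ is the \emph{trivial} class. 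So your assertion that the reduction "replaces each order-$2$ pole by a simple pole," and your claimed equivalent condition that $a(\zeta_3^i\mu)\neq 0$, are not correct as stated. Here $C_2=\mu^2a(\mu)$ and $C_1=\mu a(\mu)$, so the reduced residue is $\mu\sqrt{a(\mu)}\bigl(1+\sqrt{a(\mu)}\bigr)$, and one must also rule out $a(\zeta_3^i\mu)=1$. This does hold --- each $a(\zeta_3^i\mu)$ has strictly negative $\varpi$-adic valuation, namely $-\nu\, v_\varpi(\mu)$ --- but it has to be verified; moreover the existence of $\sqrt{C_2}$ inside $k((\varpi))$ (whose residue field is imperfect) is exactly why the hypothesis $\mu\in\varpi^2k[[\varpi^2]]$ appears: it forces $\mu$ to be a square. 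The paper carries out this computation explicitly, writing the reduced residue in terms of a constant $c\in k((\mu^2))$ and concluding from $\mu^3\notin k((\mu^2))$ that it is nonzero.

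The second, more serious gap is in property (2). Your argument is circular: you deduce that $\mc{C}$ is regular from the claim that "the reduction computed above identifies $\mc{C}/(\varpi)\mc{C}$ with $k[[z]]$," but that identification \emph{is} the content of (2). Knowing $\tilde a\equiv a\pmod{\varpi}$ only pins down the special fiber birationally; a priori the special fiber of the normalization $\mc{C}$ could be non-normal or non-reduced (the cover could have "bad reduction" at $\varpi=0$). The standard way to exclude this --- and the way the paper proceeds --- is the different criterion of Green--Matignon (\cite[I, Theorem 3.4]{GM:lg}): one computes the total degree of the different of $\mc{T}/\mc{S}$ on the generic fiber, namely $3(\nu-5)$ from the ideal $(t)$ plus $3\cdot 6=18$ from the three ideals $(\zeta_3^\alpha t-\mu)$, and checks that $3(\nu-5)+18=3(\nu+1)=\delta_{N/L}$. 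This conservation-of-different identity is the heart of why the deformation has good reduction (the three new $\ints/2\times\ints/2$ branch points with break $1$ exactly compensate the drop of $6$ in the break at the origin), and it cannot be replaced by the flatness/regularity argument you sketch.
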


\begin{proof}
Define $\mc{L}$ by adjoining $t$ to $\mc{K}$.
We proceed by deforming $a$ to an element of $\mc{L}$.  Let $\mu \in \varpi^2 k[[\varpi^2]]
\backslash \{0\}$.  Let $a' = a/t^{-6} = a/s^{-2}$, and deform $a$
to the element $\tilde{a} := a'(s - \mu^3)^{-2} =
a'\prod_{\alpha=1}^3(\zeta_3^{\alpha}t - \mu)^{-2} \in \mc{B}_{(\varpi)}
\subseteq \mc{L}$.  Note that $\tilde{a}$ reduces to $a \pmod{\varpi}$.  
Observe also that $\Gal(\mc{L}/\mc{K}) \cong \ints/3$, and the
$\FF_2$-vector space generated by the images
of $a'$ (and thus $\tilde{a}$) under this Galois action has dimension
$2$.  By Corollary \ref{CA4}, $\tilde{a} \in \mc{L}$ gives rise to an
$A_4$-extension $\mc{N}/\mc{K}$.  We claim that this is the extension
we seek.

Property (1) is obvious.  To show property (3), first note that
$\mc{S}/\mc{R}$ is branched exactly above the ideal $(s)$.  The
$\ints/2$-subextensions of $\mc{N}/\mc{L}$ are the Artin-Schreier extensions
corresponding to $\tilde{a}$, $\sigma(\tilde{a})$, and
$\sigma^2(\tilde{a})$, where $\sigma$ generates
$\Gal(\mc{L}/\mc{K})$.  Each of these is ramified at most above the ideals $(t)$
and $(\zeta_3^{\alpha}t - \mu)$, for $\alpha \in \{1, 2, 3\}$.  We will
see in the next paragraph that all of these ideals ramify in each $\ints/2$-subextension.
Thus the ramification groups of $\mc{T}/\mc{S}$ above these ideals are all $\ints/2 \times \ints/2$.  Since the
three $\ints/2$-subextensions are Galois conjugate over $\mc{K}$,
there can only be one higher ramification jump for each ideal, and it
is determined, say, by the Artin-Schreier subextension corresponding to $\tilde{a}$.

To determine the ramification, we consider the Artin-Schreier extension of the complete discrete valuation field $k((\varpi))((t))$ (resp.\
$k((\varpi))((\zeta_3^{\alpha}t - \mu))$ for $\alpha \in \{1, 2, 3\}$)
given by $\tilde{a}$.  
Since $t$ is a unit in $k((\varpi))[[\zeta_3^{\alpha}t - \mu]]$ for
any $\alpha$ and $\zeta_3^{\alpha}t - \mu$ is a unit in
$k((\varpi))[[t]]$ and in
$k((\varpi))[[\zeta_3^{\alpha'}t - \mu]]$ for any $\alpha' \neq
\alpha$ in $\{1, 2, 3\}$, the degree of the pole of $\tilde{a}$ with respect to $t$ (resp.\
$\zeta_3^{\alpha}t - \mu$) is $\nu - 6$ (resp.\ $2$).  Since $\nu - 6$
is odd, we have that the Artin-Schreier extension of
$k((\varpi))((t))$ given by $\tilde{a}$ ramifies and has ramification break $\nu -
6$.  To calculate the ramification break for the corresponding
extension of $k((\varpi))((\zeta_3^{\alpha}t - \mu))$, we assume $\alpha =
3$ for simplicity and we write
$\tilde{a}$ as a Laurent series in $(t - \mu)$.  Note that
$\tilde{a} = t^{-1}(t - \mu)^{-2}x^2$ for some $x \in k((\varpi))[[t -
\mu]]^{\times}$, and that 
$$t^{-1} = \mu^{-1} + \mu^{-2}(t-\mu) + \text{higher order terms in
}(t - \mu).$$  So 
$$\tilde{a} = c\mu^{-1}(t - \mu)^{-2} + c\mu^{-2}(t - \mu)^{-1} +
\theta,$$ 
where $\theta \in k((\varpi))[[t - \mu]]$ and $c \in k((\varpi))$ is
the ``constant'' term of $x^2$ (in fact, it is easy to see that $c \in
k((\mu^2)) = k((\varpi^4))$).  Let $b =
\sqrt{c\mu^{-1}}(t-\mu)^{-1}$.  After replacing
$\tilde{a}$ with $\tilde{a} + b^2 - b$, which does not change the
Artin-Schreier extension, we see that $\tilde{a}$ has a simple
pole (since $c \neq \mu^3$, the principal part
does not vanish).  So this extension ramifies with ramification break $1$.  This
shows property (3).

For property (2), it suffices by \cite[I, Theorem 3.4]{GM:lg} to show that the total degree of the different of $\mc{T}/\mc{R}$ is
equal to the degree of the different of $N/K$. Clearly, we
may replace $\mc{R}$ by $\mc{S}$ and $K$ by $L$.  Call these degrees
$\delta_{\mc{T}/\mc{S}}$ and $\delta_{N/L}$, respectively.

Since the ramification break of $M/L$ is $\nu$, and $N/L$ is the
compositum of Galois conjugates of $M/L$, we have that $N/L$ has $\nu$
as its single ramification break in the upper numbering, and all
nontrivial higher ramification groups of $N/L$ have order $4$.
Using Serre's different formula (\cite[IV, Proposition 4]{Se:lf}), we
obtain $\delta_{N/L} = 3(\nu + 1)$. 

For $\delta_{\mc{T}/\mc{S}}$, we add up the contributions from the
different branched ideals separately.  For the ideal $(t)$, argung as
in the previous paragraph, we have a
$\ints/2 \times \ints/2$-extension with single ramification break $\nu
- 6$.  This gives a contribution of $3(\nu - 5)$ to
$\delta_{\mc{T}/\mc{S}}$.  For each of the branched ideals $(\zeta^{\alpha}t -
\mu)$ ($\alpha \in \{1, 2, 3\}$), we have ramification group $\ints/2
\times \ints/2$ with ramification break $1$.  Using Serre's different formula again,
we get a contribution of $3 \cdot 3 \cdot 2 =
18$ to $\delta_{\mc{T}/\mc{S}}$.  Thus 
$\delta_{\mc{T}/\mc{S}} = 3(\nu - 5) + 18 = \delta_{N/L}$ and we are done.

\end{proof}

We omit the proof of the following proposition, which follows from Proposition \ref{Pcharpoort} exactly as \cite[Theorem 3.6]{Po:oc} follows from
\cite[Key Lemma 3.2]{Po:oc}.

\begin{prop}\label{Pcharpglobal}
Let $Y \to W$ be a branched $A_4$-cover of projective smooth
$k$-curves.  Suppose that the local inertia at each totally ramified point
is an extension $k[[z]]/k[[s]]$ having break
$\leq \nu$ and given rise to by an Artin-Schreier generator in
standard form divisible by $t^{-6}$ in $k[t^{-1}]$.  Set $\mc{W} = W \times_k k[[\varpi]]$.  Then there is an $A_4$-cover of 
projective smooth $k[[\varpi]]$-curves $\mc{Y} \to \mc{W}$ with
special fiber $Y \to W$ such that the totally ramified points
on the generic fiber $\mc{Y}_{\eta} \to \mc{W}_{\eta}$ have breaks $\leq \nu - 6$.
\end{prop}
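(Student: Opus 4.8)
The plan is to globalize the local deformation of Proposition~\ref{Pcharpoort} by formal patching, in exact parallel with the way \cite{Po:oc} derives its Theorem~3.6 from its Key Lemma~3.2. Write $R_0 = k[[\varpi]]$, so that $\mc{W} = W \times_k R_0$ is a smooth projective $R_0$-curve with special fiber $W$. Let $B \subseteq W$ be the branch locus of $Y \to W$, and let $w_1, \dots, w_r \in B$ be the totally ramified points; over $W^\circ := W \setminus B$ the cover restricts to an étale $A_4$-cover $Y^\circ \to W^\circ$. I would realize $\mc{W}$ as the formal gluing of the constant deformation $W^\circ \times_k R_0$ with formal disks around the (horizontal) sections lifting the points of $B$, and build $\mc{Y} \to \mc{W}$ by prescribing a $G$-cover over each piece and then patching.

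At each totally ramified point $w_i$ the completed local ring of $W$ is $k[[s_i]]$ and the cover is a local $A_4$-extension $k[[z_i]]/k[[s_i]]$ of break $\nu_i \leq \nu$, given rise to by a standard-form generator divisible by $t^{-6}$; this divisibility is exactly the hypothesis needed to form $a' = a/t^{-6}$, so I can feed this data directly into Proposition~\ref{Pcharpoort}. This yields over $\mc{A}_i = k[[\varpi, s_i]]$ an $A_4$-cover $\Spec \mc{C}_i \to \Spec \mc{A}_i$ with $\mc{C}_i \cong k[[\varpi, z_i]]$, whose reduction mod $\varpi$ is the original local extension, and whose generic fiber is branched at $(s_i)$ with inertia $A_4$ and break $\nu_i - 6$, together with three points $(\zeta_3^\alpha t - \mu)$ of inertia $\ints/2 \times \ints/2$ and break $1$. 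Because $\mu \in \varpi^2 k[[\varpi^2]]$ reduces to $0$, these three extra branch points specialize into $(s_i)$ as $\varpi \to 0$ and stay inside the formal disk $D_i$; in particular the boundary annulus $D_i \cap (W^\circ \times_k R_0)$ remains unramified. At the remaining (non-totally-ramified) branch points I would simply take the constant deformation, which introduces no new branching.

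With these models in place, I would glue. Over each boundary annulus the local cover $\mc{C}_i$ restricts to a deformation of the étale cover $Y^\circ$, and since étale covers are rigid, this restriction is forced to be the constant deformation; hence it agrees canonically with the cover prescribed over $W^\circ \times_k R_0$. The local and global pieces therefore carry compatible $A_4$-actions on all overlaps, so by formal patching (Grothendieck's existence theorem, in the form used in \cite{Po:oc}) they algebraize to a single $A_4$-cover $\mc{Y} \to \mc{W}$ of smooth projective $R_0$-curves. Its special fiber is $Y \to W$ by construction (each $\mc{C}_i$ reduces to $k[[z_i]]/k[[s_i]]$ and the extra branch points collapse into $w_i$), and on the generic fiber $\mc{Y}_\eta \to \mc{W}_\eta$ the totally ramified points are exactly those over the $(s_i)$, with break $\nu_i - 6 \leq \nu - 6$, as required.

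The substantive analytic content has already been absorbed into Proposition~\ref{Pcharpoort}, so the only real work here is the patching bookkeeping: I would need to check that the branch sections stay disjoint over $R_0$ and specialize correctly, that the gluing is genuinely $A_4$-equivariant on the overlaps, and that the algebraized total space is smooth over $R_0$ with the asserted special fiber. The main obstacle is thus not any new geometry but the careful verification that the local deformations of Proposition~\ref{Pcharpoort} and the constant deformation of $Y^\circ$ match as $G$-covers along the annuli; once that compatibility is confirmed, the argument proceeds verbatim as in \cite{Po:oc}.
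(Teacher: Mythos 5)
Your proposal is correct and follows exactly the route the paper intends: the paper omits this proof entirely, stating only that it follows from Proposition \ref{Pcharpoort} just as \cite[Theorem 3.6]{Po:oc} follows from \cite[Key Lemma 3.2]{Po:oc}, and your patching argument (constant deformation away from the branch locus, the local deformation of Proposition \ref{Pcharpoort} on formal disks around the totally ramified points, gluing via rigidity of \'etale covers on the boundary annuli) is precisely that argument spelled out. Your observation that the new branch points have inertia $\ints/2 \times \ints/2$, hence are not totally ramified, so the totally ramified points of the generic fiber all have break $\leq \nu - 6$, correctly captures the key point of the statement.
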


Before we prove Proposition \ref{Pmaindef}, we recall
\emph{Harbater-Katz-Gabber} covers (or \emph{HKG-covers}) from
\cite{Ka:lg}.  Let $G \cong P \rtimes \ints/m$, with $P$ a $p$-group
and $p \nmid m$.  If $k[[z]]/k[[s]]$ is a local
$G$-extension,
then the associated HKG-cover is the unique branched $G$-cover $X \to \proj^1_k$ 
tamely ramified of index $m$ above $s = \infty$ and totally ramified
above $s = 0$ ($s$ being
a coordinate on $\proj^1_k$), such that the formal completion of $X \to \proj^1_k$ above
$0$ yields $k[[z]]/k[[s]]$.  

{\bf Proof of Proposition \ref{Pmaindef}:} 
The proof is essentially the same as the proof of \cite[Proposition
1.11]{Ob:go}, which itself is adapted from \cite{Po:oc}.  We
include it here for completeness.

Let $Y \to W = \proj^1$ be the Harbater-Katz-Gabber cover
associated to $k[[z]]/k[[s]]$,
let $\mc{Y} \to \mc{W}$ be the $A_4$-cover over $k[[\varpi]]$ guaranteed by Proposition
\ref{Pcharpglobal}, let $\ol{\mc{Y}} \to \ol{\mc{W}}$ be its base
change to the integral closure of $k[[\varpi]]$ in $\ol{k((\varpi))}$, and let $\ol{\mc{Y}_{\eta}} \to \ol{\mc{W}_{\eta}}$ be
the generic fiber of $\mc{Y} \to \mc{W}$. 
Recall that we assume that \emph{every} local $A_4$-extension
with break $\leq \nu - 6$ lifts to characteristic zero.  
Furthermore, by \cite{Pa:rc} and the theory of tame ramification,
every abelian extension of $k[[s]]$ (and thus of $\ol{k((\varpi))}((s))$)
with
Galois group a proper subgroup of $A_4$ lifts to characteristic zero.
So the local-global principle tells us that $\ol{\mc{Y}_{\eta}} \to \ol{\mc{W}_{\eta}}$ 
lifts to a cover $\mc{Y}_{\mc{O}_1} \to \mc{W}_{\mc{O}_1}$ over some
characteristic zero complete discrete valuation ring $\mc{O}_1$ with residue field $\ol{k((\varpi))}$.  
Then, \cite[Lemma 4.3]{Po:oc} shows that we can ``glue'' the covers
$\ol{\mc{Y}} \to \ol{\mc{W}}$ and $\mc{Y}_{\mc{O}_1} \to \mc{W}_{\mc{O}_1}$
along the generic fiber of the former and the special fiber of the
latter, in order to get a cover defined over a \emph{rank two} characteristic zero valuation ring $\mc{O}$ with
residue field $k$ lifting $Y \to W$ (cf.\ \cite[p.\ 319, second paragraph]{Po:oc}).  Note that this process works starting with any
$A_4$-extension of $k[[s]]$ with
break $\nu$, and that such extensions can be parameterized by some affine space $\aff^N$ 
(with one coordinate corresponding to each possible coefficient in an
entry of an Artin-Schreier generator in standard form).  

To conclude, we remark that \cite[Proposition 4.7]{Po:oc} and its setup carry through exactly in our situation, with our $\aff^N$ playing the role of 
$\aff^{|\mathbf{\iota}|}$ in \cite{Po:oc}.  Indeed, we have that the analog of $\Sigma_{\mathbf{\iota}}$ in that proposition contains all closed points,
by the paragraph above.  Thus we can in fact
lift $Y \to W$ over a \emph{discrete} characteristic zero valuation ring.  Applying the easy direction of the local-global principle, we obtain a lift of $k[[z]]/k[[s]]$.  This 
concludes the proof of Proposition \ref{Pmaindef}. \qed

\section{The form of a lift}\label{Sadjust}

We start by reviewing lifts of $\ints/2$-extensions of $k[[t]]$.  The
following lemma is well-known, but difficult to cite directly from the
literature.  We provide a proof.

\begin{lemma}\label{LZ2lifts}
Let $k((u))/k((t))$ be a $\ints/2$-extension with Artin-Schreier
generator $a \in t^{-1}k[t^{-1}]$ in standard form and ramification break $\nu$.  Let $A$
be a lift of $a$ to $T^{-1}R[T^{-1}]$ of degree $\nu$.  If $\Phi
\in 1 + T^{-1}\mf{m}[T^{-1}]$ has degree $\nu$ or $\nu + 1$ and
satisfies 
$$
\Phi = H^2 + 4A + o(4)
$$
for some $H \in 1 +
T^{-1}\mf{m}[T^{-1}]$, then the normalization of $R[[T]]$ in
$M := \Frac(R[[T]])[\sqrt{\Phi}]$ is a lift of $k[[u]]/k[[t]]$ to
characteristic zero.  Furthermore, $\Phi$ has simple roots.
\end{lemma}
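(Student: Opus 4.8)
The plan is to make the standard Kummer-to-Artin--Schreier degeneration explicit and then to clear the pole at the wildly ramified point. Writing $y=\sqrt{\Phi}$ and setting $w=(y-H)/2$, the relation $y^2=\Phi=H^2+4A+o(4)$ becomes
\[
w^2+Hw=\frac{\Phi-H^2}{4}=A+o(1),
\]
where $o(1)$ denotes coefficients in $\mf m$. Since $H\equiv 1$ and $A\equiv a\pmod{\mf m}$, the reduction of this relation is $\bar w^2+\bar w=a$, the Artin--Schreier equation for $k((u))/k((t))$. The difficulty is that $A$, and hence $w$, has a pole at $T=0$, so $w$ is not integral over $R[[T]]$ and cannot directly generate the normalization at the ramified point.

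To remedy this I would pass to $U:=T^{(\nu+1)/2}w$; here $\nu$ is odd by Proposition \ref{P1or5}, so $(\nu+1)/2\in\ints$, and this mirrors the fact that $t^{(\nu+1)/2}u$ is a uniformizer of $k[[u]]$. Multiplying the relation for $w$ by $T^{\nu+1}$ gives $U^2+PU-Q=0$ with $P=T^{(\nu+1)/2}H$ and $Q=T^{\nu+1}(A+o(1))$. The degree hypothesis on $\Phi$ forces the degree of $H$ in $T^{-1}$ to be at most $(\nu+1)/2$, so $P\in R[[T]]$, and since $A$ has degree $\nu$ in $T^{-1}$ one checks $Q\in R[[T]]$ as well. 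Thus $B:=R[[T]][U]/(U^2+PU-Q)$ is finite and free of rank two over $R[[T]]$, and reducing mod $\pi$ yields $B/\pi=k[[t]][\bar U]/(\bar U^2+t^{(\nu+1)/2}\bar U-t^{\nu+1}a)$. A short computation shows $t\in(\bar U)$, so this reduction is a DVR with uniformizer $\bar U=t^{(\nu+1)/2}u$; that is, $B/\pi\cong k[[u]]$, and the $\ints/2$-action $U\mapsto -U-P$ reduces to $u\mapsto u+1$.

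With this in hand the lift is almost formal: because $t=(\mathrm{unit})\,\bar U^{2}$ in $k[[u]]$ we have $T\in(\pi,U)$, so the maximal ideal of the two-dimensional local ring $B$ is generated by the two elements $\pi$ and $U$; hence $B$ is regular, equal to its own normalization $\tilde R\cong R[[U]]$, and is the desired $\ints/2$-lift of $k[[u]]/k[[t]]$. For the final assertion I would use that a regular local ring is regular after inverting $\pi$ (so that $2$ becomes invertible): the generic fibre of the double cover $B/R[[T]]$ is then normal, and for a $\ints/2$-cover with $2$ invertible this forces its discriminant to be squarefree. Since that discriminant is $P^2+4Q=T^{\nu+1}\Phi$, and the factor $T^{\nu+1}$ exactly cancels the order-$N$ pole of $\Phi$ at $T=0$ (where $N=\deg_{T^{-1}}\Phi\in\{\nu,\nu+1\}$), squarefreeness is equivalent to $\Phi$ having simple roots, as claimed.

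I expect the main obstacle to be the passage through $T=0$: one must verify not merely that the reduced equation is an Artin--Schreier equation but that $B/\pi$ is the \emph{full} ring of integers $k[[u]]$ rather than a proper suborder. The cleanest way to see this is to exhibit a uniformizer, which is exactly the role of $U=T^{(\nu+1)/2}w$ together with the relation $t\in(\bar U)$; equivalently one may compare the discriminant valuation $\nu+1$ of the order $B/\pi$ with the different of the break-$\nu$ extension computed by Serre's formula. Everything else --- regularity of $B$, the identification $\tilde R\cong R[[U]]$, and the simplicity of the roots of $\Phi$ --- then follows formally.
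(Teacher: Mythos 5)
Your argument is correct in substance but takes a genuinely different route from the paper's. The paper also begins with the substitution $\sqrt{\Phi}=H+2Y$, reducing to $Y^2-Y=A+o(1)$ over $R[[T]]_{(\pi)}$, but then finishes purely numerically: Serre's formula gives $\nu+1$ for the degree of the different of $k[[u]]/k[[t]]$, the generic fibre is a tame $\ints/2$-cover branched at the at most $\nu+1$ points coming from the roots of $\Phi$ (and possibly $T=0$), so its different has degree at most $\nu+1$, and the Green--Matignon different criterion \cite[I, 3.4]{GM:lg} forces equality --- which yields simultaneously that the normalization is $R[[U]]$ and that the roots of $\Phi$ are simple. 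You instead exhibit the normalization explicitly: the integral generator $U=T^{(\nu+1)/2}w$, the verification that $t\in(\bar U)$ so that $B/\pi$ is already the full DVR $k[[u]]$ (this uses that $\deg a=\nu$ exactly, so $t^{\nu+1}a$ is a unit times $t$), and regularity of $B$ via $\mf{m}_B=(\pi,U)$. This is more self-contained --- it avoids the different criterion entirely --- at the cost of more computation; your closing alternative (comparing the discriminant valuation of $B/\pi$ with Serre's formula) is essentially the paper's route. One small point: the oddness of $\nu$ follows directly from $a$ being in standard form; Proposition \ref{P1or5} concerns $A_4$-extensions and is not what you want to cite here.

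One step does need repair. You assert that the degree hypothesis on $\Phi$ forces $\deg_{T^{-1}}H\le(\nu+1)/2$, whence $P=T^{(\nu+1)/2}H\in R[[T]]$. This is not literally true: for instance $H=1+cT^{-100}$ with $v(c)>1$ satisfies $\Phi=H^2+4A+o(4)$ with $\Phi=1+4A$ whenever $\nu<199$, since $2cT^{-100}+c^2T^{-200}=o(4)$; the lemma as stated places no degree bound on $H$. What is true --- by a downward induction on the coefficients of $H$, using that $[H^2]_j\in 4\mf{m}$ for all $j>\nu+1$ --- is that every coefficient $h_i$ of $H$ with $i>(\nu+1)/2$ satisfies $v(h_i)>1$. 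Hence truncating $H$ at degree $(\nu+1)/2$ changes $H^2$ only by $o(4)$, and one may assume the degree bound without loss of generality (alternatively, add it as a hypothesis; it holds in every application in the paper). With that patch, the construction of $B$, its regularity, the identification $B\cong R[[U]]$, and the deduction of simple roots of $\Phi$ from normality of $B[\pi^{-1}]$ over the principal ideal domain $R[[T]][\pi^{-1}]$ all go through.
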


\begin{proof}
The extension $k((u))/k((t))$ is given by adjoining an element $y$
such that $y^2 - y = a$.  Making a substitution $\sqrt{\Phi} = H + 2Y$,
the expression for $\Phi$ given in the lemma yields
$$H^2 + 4HY + 4Y^2 = H^2 + 4A + o(4),$$
or $Y^2 - Y = A + o(1)$.  Thus we see that
the normalization of $R[[T]]_{(\pi)}$ in $M$
gives $k((u))/k((t))$ upon reduction modulo $\pi$.  By Serre's
different formula (\cite[IV, Proposition 4]{Se:lf}), the degree of the
different of $k[[u]]/k[[t]]$ is $\nu + 1$.  On the other hand, the
normalization of $R[[T]] \otimes_R \Frac(R)$ in $M$ is branched at at
most $\nu + 1$ maximal ideals, corresponding to the roots of $\Phi$ and also $0$
if $\Phi$ has degree $\nu$.  Since this is a tamely ramified
$\ints/2$-extension, the degree of its different is at most $\nu + 1$.  By \cite[I,
3.4]{GM:lg}, the degree of the different is exactly $\nu + 1$ and the
normalization of $R[[T]]$ in $M$ is a lift of $k[[u]]/k[[t]]$.
This also shows that the roots of $\Phi$ are all simple.
\end{proof}

For Proposition \ref{Pliftform} below, recall that $s = t^3$ and $S = T^3$.
\begin{prop}\label{Pliftform}
Let $k[[z]]/k[[s]]$ be a local $A_4$-extension with break $\nu$ given
rise to by $a \in t^{-1}k[t^{-1}]$ in standard form.  If $F(T^{-1})$ and $H(T^{-1})$ are in $1
+ T^{-1}\mf{m}[T^{-1}]$ such that $F$ has degree $(\nu + 1)/2$ and 
$$F(\zeta_3T^{-1})F(\zeta_3^2T^{-1}) = H^2 + 4A + o(4),$$ where $A$ is a
lift of $a$ to $T^{-1}R[T^{-1}]$ of degree $\nu$, then the normalization of $R[[S]]$
in the $A_4$-extension of $\Frac(R[[S]])$ given rise to by
$F(\zeta_3T^{-1})F(\zeta_3^2T^{-1})$ is a lift of $k[[z]]/k[[s]]$ to
characteristic zero.
\end{prop}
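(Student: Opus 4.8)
The plan is to reduce the statement about the $A_4$-extension to the statement about its $\ints/2$-subextensions, where I can invoke Lemma \ref{LZ2lifts}, and then control the global ramification via a different count as in Proposition \ref{Pcharpoort}. Let me think about how the pieces fit together.

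First I would set up the characteristic-zero picture. Working over $\Frac(R[[T]])$, I consider the element $\Phi := F(\zeta_3 T^{-1})F(\zeta_3^2 T^{-1})$, which is $\Gal(L/K)$-conjugate (via $\sigma$, acting by $T \mapsto \zeta_3 T$) to $F(T^{-1})F(\zeta_3 T^{-1})$ and to $F(\zeta_3^2 T^{-1})F(T^{-1})$. By Proposition \ref{PA4char0}, the three conjugates of $\Phi$ generate a $2$-dimensional $\FF_2$-subspace of $L^\times/(L^\times)^2$ (their product is $\bigl(F(T^{-1})F(\zeta_3 T^{-1})F(\zeta_3^2 T^{-1})\bigr)^2$, a square, giving the single relation), so $\Phi$ genuinely gives rise to an $A_4$-extension $N_R/\Frac(R[[S]])$ in characteristic zero. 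Let $\mc{Z}$ be the normalization of $R[[S]]$ in $N_R$; this is the candidate lift.

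The key reduction is that $\mc{Z}/R[[S]]$ reduces to the correct $A_4$-extension modulo $\pi$, and that it is a genuine lift (no collapsing of the different). For the reduction-mod-$\pi$ statement, I would observe that $\Phi = H^2 + 4A + o(4)$ is exactly the hypothesis of Lemma \ref{LZ2lifts} applied to the $\ints/2$-subextension given rise to by $\Phi$, with break $\nu$; indeed $\Phi$ has degree $(\nu+1)/2 + (\nu+1)/2 - (\text{constant})$—here I need the degree bookkeeping to land $\Phi$ at degree $\nu$ or $\nu+1$ in $T^{-1}$, which follows since $\deg F = (\nu+1)/2$. So Lemma \ref{LZ2lifts} shows the $\ints/2$-subextension cut out by $\Phi$ lifts $k[[u]]/k[[t]]$, where $u$ corresponds to the Artin-Schreier root of $a$. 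The same argument applies to each of the three $\sigma$-conjugate $\ints/2$-subextensions, since $\sigma$ permutes them and the break $\nu$ is preserved. Thus the reduction of $\mc{Z}/R[[T]]$ is the compositum of the three Artin-Schreier extensions given by $a,\sigma(a),\sigma^2(a)$, which is exactly $k[[z]]/k[[t]]$; descending along the tame $\ints/3$ then identifies the reduction with $k[[z]]/k[[s]]$.

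Finally, to confirm $\mc{Z}$ is a lift and not merely a deformation with extra branching, I would run the different computation via \cite[I, Theorem 3.4]{GM:lg}, exactly as in the proof of Proposition \ref{Pcharpoort}: it suffices to check that the total different degree of $N_R/\Frac(R[[T]])$ equals $\delta_{N/L} = 3(\nu+1)$. Each of the three $\ints/2$-subextensions, being tame in characteristic zero, contributes a different of degree equal to its number of branch points, namely the simple roots of $\Phi$ (of which Lemma \ref{LZ2lifts} guarantees there are exactly $\nu+1$, counting the root at $0$ appropriately); summing the three conjugate contributions and comparing with $3(\nu+1)$ forces equality and rules out any drop. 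The main obstacle I anticipate is the degree/pole bookkeeping: verifying that the chosen factorization $\Phi = F(\zeta_3 T^{-1})F(\zeta_3^2 T^{-1})$ has precisely the pole order $\nu$ needed to match the break, and that the three $\sigma$-conjugate factorizations simultaneously realize the three distinct $\ints/2$-subextensions without unexpected common branching—this is where the special shape of $F$ (rather than an arbitrary lift of $a$) is essential, and it is the heart of why one builds the lift from a single $F$ rather than three independent Artin-Schreier lifts.
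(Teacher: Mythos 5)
Your proposal is correct in its main lines and shares the paper's skeleton: apply Lemma \ref{LZ2lifts} to $\Phi = F(\zeta_3T^{-1})F(\zeta_3^2T^{-1})$ and its $\sigma$-conjugates to lift the three quadratic subextensions of $k((z))/k((t))$, verify via Proposition \ref{PA4char0} that $\Phi$ gives rise to an $A_4$-extension, assemble the Klein four extension of $R[[T]]$, and descend through the tame $\ints/3$. Where you genuinely diverge is at the assembly step. The paper feeds the count ``$\Phi$ and $\sigma(\Phi)$ have exactly $(\nu+1)/2$ common zeros'' (visible from the shared factor of the two products, together with the simplicity of roots guaranteed by Lemma \ref{LZ2lifts}) into the ready-made Green--Matignon criterion for lifting $\ints/2\times\ints/2$-extensions, \cite[I, Theorem 5.1]{GM:lg}. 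You instead apply the general different criterion \cite[I, Theorem 3.4]{GM:lg} to the whole Klein four extension, computing the generic different as $3(\nu+1)$ and matching it against $\delta_{N/L}=3(\nu+1)$; this works and is more self-contained (it essentially re-proves the relevant case of Theorem 5.1), at the cost of two steps you should make explicit. First, passing from the three quadratic differents to the different of the $V_4$-cover requires the conductor--discriminant formula, or equivalently the observation that every branch point has inertia exactly $\ints/2$, lies on exactly two of the three quadratic subcovers, and contributes $2$ to the different, there being $3(\nu+1)/2$ such points in all. Second, your justification that the conjugates of $\Phi$ span a $2$-dimensional subspace of $L^{\times}/(L^{\times})^2$ is incomplete: the product of the three conjugates being a square only gives the bound $d \leq 2$, and to get $d = 2$ you must also rule out any single conjugate or product of two conjugates being a square, which follows (as in the paper) from the fact that $F(T^{-1})$, $F(\zeta_3T^{-1})$, $F(\zeta_3^2T^{-1})$ have pairwise disjoint simple roots, itself a consequence of the simplicity of the roots of $\Phi$ from Lemma \ref{LZ2lifts}. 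Finally, note that your intermediate assertion that the reduction of the normalization ``is the compositum'' of the three Artin--Schreier extensions is not available before the different count (it is precisely what that count establishes); what you may assert beforehand, and what suffices, is that the residue field at the Gauss valuation is $k((z))$, by comparing $[N_R:\Frac(R[[T]])]=4$ with $[k((z)):k((t))]=4$.
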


\begin{proof}
Let the local $\ints/2$-extension $k[[u]]/k[[t]]$ be given by normalizing
$k[[t]]$ in the Artin-Schreier $\ints/2$-extension of $k((t))$ given
by $a$.  Let $L = \Frac(R[[T]])$.  By Lemma \ref{LZ2lifts}, normalizing $R[[T]]$ in the degree $2$ Kummer
extension $M/L$ given by some polynomial $\Phi \in 1 + T^{-1}\mf{m}[T^{-1}]$
of degree $\nu + 1$ in $T^{-1}$ such that $\Phi = H^2 + 4A + o(4)$ with $A$ as in the
proposition gives a lift of $k[[u]]/k[[t]]$ to characteristic zero,
and such a $\Phi$ has simple roots.

Let $\sigma$ generate $\Gal(L/\Frac(R[[S]]))$ (and also, by abuse of
notation, $\Gal(k((t))/k((s)))$).   Write $\Phi = F(\zeta_3T^{-1})F(\zeta_3^2T^{-1})$ for
some polynomial $F \in 1 + T^{-1}\mf{m}[T^{-1}]$ of degree $(\nu +
1)/2$ as in the proposition.  Then $\Phi$ has simple roots, and thus $F(T^{-1})$, $F(\zeta_3T^{-1})$,
and $F(\zeta_3^2T^{-1})$ have pairwise disjoint simple roots.
Consequently, the $\FF_2$-subspace of $L^{\times}/(L^{\times})^2$ generated by
$\Phi$, $\sigma(\Phi)$, and $\sigma^2(\Phi)$ has dimension $2$.  By Proposition \ref{PA4char0}, this is equivalent to the
Galois closure $N$ (over $\Frac(R[[S]])$) of $M$ having Galois group $A_4$.

Let $k((u'))/k((t))$ be the Artin-Schreier extension
given by $\sigma(a)$.  Clearly, the normalization of $R[[T]]$ in
$\Frac(R[[T]])[\sqrt{\sigma(\Phi)}]$ is a lift of $k[[u']]/k[[t]]$.
Note that $k[[z]]$ is the normalization of $k[[t]]$ in the compositum
of $k((u))$ and $k((u'))$.  Analogously, $N := \Frac(R[[T]])(\sqrt{\Phi},
\sqrt{\sigma(\Phi)})$ is the $A_4$-extension given rise to by $\Phi$. 
Now, $\Phi$ and $\sigma(\Phi)$ have exactly $(\nu+1)/2$ zeroes in
common.  Thus \cite[I, Theorem 5.1]{GM:lg} shows that the normalization of
$R[[T]]$ in $N$ is a lift of the Klein four extension $k[[z]]/k[[t]]$ (and is isomorphic to
$R[[Z]]/R[[T]]$ for $Z$ reducing to $z$).   
We conclude that $R[[Z]]/R[[S]]$ is a lift of $k[[z]]/k[[s]]$.

\end{proof}

\section{Proof of Theorem \ref{Tmain}}\label{Sbase}

In this section, let $k[[z]]/k[[s]]$ be a local $A_4$-extension given
rise to by $a \in t^{-1}k[t^{-1}]$ in standard form.  Recall that $\deg(a) = \nu$, where $\nu$
is the break in $k[[z]]/k[[s]]$.  We will prove that $k[[z]]/k[[s]]$
lifts to characteristic zero by strong induction on $\nu$.  

\begin{prop}\label{Pbase1}
If $\nu = 1$, then $k[[z]]/k[[s]]$ lifts to characteristic zero.
\end{prop}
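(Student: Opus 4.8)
The plan is to apply Proposition \ref{Pliftform} directly, after pinning down the very restricted shape of $a$. Since $a \in t^{-1}k[t^{-1}]$ is in standard form and $\deg(a) = \nu = 1$, the only admissible term is the linear one, so $a = c\,t^{-1}$ for some $c \in k^\times$ (nonzero because $a$ is an Artin-Schreier representative of the $\ints/2$-subextension). I would fix a lift $\gamma \in R$ of $c$ and set $A = \gamma T^{-1} \in T^{-1}R[T^{-1}]$, a degree-$1$ lift of $a$. Because $(\nu+1)/2 = 1$, Proposition \ref{Pliftform} asks us to produce $F, H \in 1 + T^{-1}\mf{m}[T^{-1}]$ with $F$ of degree $1$ and
$$F(\zeta_3 T^{-1})F(\zeta_3^2 T^{-1}) = H^2 + 4A + o(4).$$

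First I would compute the left-hand side for a general degree-$1$ candidate $F(T^{-1}) = 1 + \beta T^{-1}$ with $\beta \in \mf{m}$. Using $\zeta_3 + \zeta_3^2 = -1$ and $\zeta_3^3 = 1$, one gets
$$F(\zeta_3 T^{-1})F(\zeta_3^2 T^{-1}) = 1 - \beta T^{-1} + \beta^2 T^{-2}.$$
Thus the task reduces to matching $1 - \beta T^{-1} + \beta^2 T^{-2}$ against $H^2 + 4\gamma T^{-1} + o(4)$.

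The key choice is to take $H = 1$ and $\beta = -4\gamma$. Then the linear terms agree exactly, since $-\beta T^{-1} = 4\gamma T^{-1}$, and the only remaining discrepancy is the quadratic term $\beta^2 T^{-2} = 16\gamma^2 T^{-2} = 4(4\gamma^2)T^{-2}$, which lies in $o(4)$ because $4\gamma^2 \in \mf{m}$. Note that $F = 1 - 4\gamma T^{-1}$ genuinely has degree $1$, as $\gamma \neq 0$, and that both $F$ and $H$ lie in $1 + T^{-1}\mf{m}[T^{-1}]$. Hence the hypotheses of Proposition \ref{Pliftform} are satisfied, and the normalization of $R[[S]]$ in the resulting $A_4$-extension is a lift of $k[[z]]/k[[s]]$ to characteristic zero.

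Because $\nu = 1$ is the smallest possible break, I expect no genuine obstacle here: the entire content is the single cancellation $-\beta = 4\gamma$, made possible by the freedom to absorb the $4A$ term into the linear coefficient of $F$, together with the observation that the leftover quadratic term is automatically of size $o(4)$. The only thing to watch is the elementary bookkeeping with the symbol $o(4)$ (namely that $16\gamma^2 \in 4\mf{m}$), which is immediate from the normalization $v(2) = 1$.
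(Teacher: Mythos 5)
Your proof is correct and is essentially identical to the paper's: both take $H = 1$ and $F = 1 - 4c_1T^{-1}$, with the only content being that the leftover quadratic term $16c_1^2T^{-2}$ lies in $o(4)$. Your write-up just makes the intermediate computation of $F(\zeta_3T^{-1})F(\zeta_3^2T^{-1})$ and the degree/nonvanishing checks more explicit.
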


\begin{proof}
Since $\nu = 1$, we have $a = \ol{c}_1t^{-1}$, with
$\ol{c}_1 \in k$.  By Proposition \ref{Pliftform}, it suffices to find $F(T^{-1})$ and $H(T^{-1})$ in $1 + T^{-1}R[T^{-1}]$
such that $F$ has degree $1$ and
$$F(\zeta_3T^{-1})F(\zeta_3^2T^{-1}) = H^2 + 4c_1T^{-1} + o(4),$$ where
$c_1$ is a lift of $\ol{c}_1$ to $R$.  This is accomplished by taking $H =
1$ and $F =  1 - 4c_1T^{-1}$.
\end{proof}

\begin{prop}\label{Pbase5}
If $\nu = 5$, then $k[[z]]/k[[s]]$ lifts to characteristic zero.
\end{prop}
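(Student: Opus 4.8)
The plan is to follow the template set by Proposition \ref{Pbase1} and reduce everything to an explicit computation governed by Proposition \ref{Pliftform}. Since $\nu = 5$ and $a$ is in standard form, Corollary \ref{CA42} forces $a = \ol c_1 t^{-1} + \ol c_5 t^{-5}$ with $\ol c_5 \neq 0$ (there is no $t^{-3}$-term, as degrees divisible by $3$ are excluded). Thus it suffices to produce $F(T^{-1})$ and $H(T^{-1})$ in $1 + T^{-1}\mf m[T^{-1}]$, with $F$ of degree $(\nu+1)/2 = 3$, satisfying
$$F(\zeta_3 T^{-1}) F(\zeta_3^2 T^{-1}) = H^2 + 4A + o(4), \qquad A = c_1 T^{-1} + c_5 T^{-5},$$
where $c_1, c_5$ are lifts of $\ol c_1, \ol c_5$, so that $c_5 \in R^{\times}$.

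First I would expand both sides as polynomials of degree $6$ in $w := T^{-1}$. Writing $F = 1 + a_1 w + a_2 w^2 + a_3 w^3$ and $H = 1 + h_1 w + h_2 w^2 + h_3 w^3$, invariance of the left-hand side under $\zeta_3 \leftrightarrow \zeta_3^2$ makes its coefficients polynomials in $a_1,a_2,a_3$ over $R$ (for instance the coefficients of $w$, $w^5$, $w^6$ are $-a_1$, $-a_2 a_3$, $a_3^2$). Matching the coefficients of $w^0, \dots, w^6$ against those of $H^2 + 4A$ modulo $4\mf m$ then yields a system of six congruences in $a_1,a_2,a_3,h_1,h_2,h_3$, which I would solve explicitly.

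The main obstacle is that, unlike the case $\nu = 1$, one cannot take $H = 1$. The $w^5$-congruence reads $-a_2 a_3 \equiv 4 c_5 \pmod{4\mf m}$, and since $c_5$ is a unit this forces $a_2 a_3$ to have valuation $2$, which is incompatible with the $w^2$-congruence when $H=1$; balancing the system in fact requires coefficients of fractional valuation, so the construction must use that $R$ is large enough to contain $\sqrt 2$. I would resolve this by coupling $H$ to $F$: set $a_2 = h_2$, $a_3 = h_3$, and (forced by the $w^1$-congruence) $a_1 = -2h_1 - 4c_1$. With these choices the $w^1$, $w^3$, $w^4$, and $w^6$-congruences hold automatically (the relevant differences are $0$, $4c_1 h_2$, $4c_1 h_3$, and $0$, the middle two lying in $4\mf m$ since $h_2,h_3 \in \mf m$), the $w^2$-congruence reduces to $3h_2 \equiv -h_1^2$, and the $w^5$-congruence reduces to $h_1^2 h_3 \equiv 4c_5$. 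Taking $h_1 = \sqrt 2$ then gives the clean values $h_2 = a_2 = -2/3$ and $h_3 = a_3 = 2c_5$, together with $a_1 = -2\sqrt 2 - 4c_1$; note $F$ has degree exactly $3$ since its leading coefficient $2c_5$ is nonzero. I would finish by checking directly that every coefficient of $F(\zeta_3 T^{-1})F(\zeta_3^2 T^{-1}) - H^2 - 4A$ lies in $4\mf m$, so that Proposition \ref{Pliftform} yields the desired lift. The genuinely hard part is discovering this coupling and recognizing that a ramified coefficient is unavoidable; once the ansatz is in hand, the verification is a routine computation.
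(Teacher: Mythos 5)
Your proof is correct and follows essentially the same route as the paper: reduce to Proposition \ref{Pliftform} and exhibit explicit $F, H \in 1 + T^{-1}\mf{m}[T^{-1}]$ solving the congruence, using a ramified element of $R$. The only difference is the explicit ansatz --- the paper takes $H = 1 + bT^{-1} + b^2T^{-2}$ with $v(b) = 2/5$ and $a_2 = b^2$, $a_3 = -4c_5/b^2$, whereas you couple $F$ to a degree-$3$ $H$ via $a_2 = h_2$, $a_3 = h_3$, $a_1 = -2h_1 - 4c_1$ with $h_1 = \sqrt{2}$ --- and both choices verify correctly.
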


\begin{proof}
Since $\nu = 5$, we have $a = \ol{c}_1t^{-1}
+ \ol{c}_5t^{-5}$, with
$\ol{c}_1, \ol{c}_5 \in k$.  By Proposition \ref{Pliftform}, it suffices to find $F(T^{-1})$ and $H(T^{-1})$ in $1 + T^{-1}R[T^{-1}]$
such that $F$ has degree $3$ and
$$F(\zeta_3T^{-1})F(\zeta_3^2T^{-1}) = H^2 + 4c_1T^{-1} + 4c_5T^{-5} +
o(4),$$ where each $c_i$ is a lift of $\ol{c}_i$ to $R$. 

Let $b \in R$ be any element such that $v(b) = 2/5$.  
Write $$F(T^{-1}) = 1 + a_1T^{-1} + a_2T^{-2} + a_3T^{-3},$$ where
$$a_1 = -2b - 4c_1, \ \ a_2 = b^2, \ \ a_3 = -4c_5/b^2.$$  Note that
$v(a_1) = 7/5$, $v(a_2) = 4/5$, and $v(a_3) = 6/5$. 
Then 
\begin{eqnarray*}
F(\zeta_3T^{-1})F(\zeta_3^2T^{-1}) &=& 1 - a_1T^{-1} - a_2T^{-2} +
                                       a_2^2T^{-4} - a_2a_3T^{-5} +
                                       o(4) \\
&=& 1 + (4c_1 + 2b)T^{-1} - b^2T^{-2} + b^4T^{-4} + 4c_5T^{-5} + o(4)
  \\
&=& (1 + bT^{-1} + b^2T^{-2})^2 + 4c_1T^{-1} + 4c_5T^{-5} + o(4).
\end{eqnarray*}
We conclude by taking $H = 1 + bT^{-1} + b^2T^{-2}$.

\end{proof}

{\bf Proof of Theorem \ref{Tmain}:}  We use strong induction on the break $\nu$ of $k[[z]]/k[[s]]$, which
only takes values congruent to $1$ or $5$ modulo $6$ (Proposition
\ref{P1or5}).  The base cases $\nu = 1$ and $\nu = 5$ are Propositions
\ref{Pbase1} and \ref{Pbase5}, respectively.  The induction step is
Proposition \ref{Pmaindef}. \qed

\begin{remark}\label{Rpop}
Florian Pop has informed the author of his own proof, which uses much
the same method.  In place of the deformation in
Proposition \ref{Pcharpoort}, he uses one for which it is slightly more difficult
to verify that it yields an $A_4$-extension, but which immediately reduces Theorem
\ref{Tmain} to the case $\nu = 1$ (eliminating the need for
Proposition \ref{Pbase5}).
\end{remark}

\begin{question}
Given $k$, does there exist a particular DVR $R$ in characteristic zero such that
all local $A_4$-extensions over $k$
lift over $R$?  This is known for local $G$-extensions in
characteristic $p$ where $G$ is cyclic with $v_p(|G|) \leq 2$
(see \cite{GM:lg}, where it is shown that $W(k)[\zeta_{p^2}]$ works).
Since our proof is rather inexplicit, this question remains open for $A_4$.
\end{question}

\bibliographystyle{alpha}
\bibliography{main}

\end{document}